\newtheorem{definition}{Definition}[section]
\newtheorem{lemma}[definition]{Lemma}
\newtheorem{theorem}[definition]{Theorem}
\newtheorem{corollary}[definition]{Corollary}
\newtheorem{observation}[definition]{Observation}
\newtheorem{conjecture}[definition]{Conjecture}
\newcommand{\R}{\mathcal{R}}
\newcommand{\M}{\mathcal{M}}
\begin{document}
\title{Reconfiguration Graphs for Minimal Domination Sets}

\author{Iain Beaton\thanks{Corresponding author}\\
\small Department of Mathematics \& Statistics\\[-0.8ex]
\small Acadia University\\[-0.8ex] 
\small Wolfville, CA\\
\small\tt iain.beaton@acadiau.ca\\
}

\date{November 1, 2024}

\maketitle              

\begin{abstract}
A dominating set $S$ in a graph is a subset of vertices such that every vertex is either in $S$ or adjacent to a vertex in $S$. A minimal dominating set $M$ is a dominating set such that $M-v$ is not a dominating set for all $v \in M$. In this paper we introduce a reconfiguration graph $\R(G)$ for minimal dominating sets under a generalization of the token sliding model. We give some preliminary results which include showing that $\R(G)$ is connected for trees and split graphs. Additionally we classify all graphs which have $\R(G) = K_n$ and  $\R(G) = \overline{K_n}$ for all $n$.

\end{abstract}

\section{Introduction}\label{sec:intro}
The reconfiguration problem asks whether we can find a step-by-step transformation between two feasible solutions of a problem. Interest regarding combinatorial reconfiguration has steadily increased during the last 15 years. One well studied problem is the problem regarding the reconfiguration of dominating sets \cite{haddadan2016complexity, lokshtanov2018reconfiguration, mouawad2017parameterized, suzuki2016reconfiguration, bonamy2021dominating, kvrivstan2023shortest, rautenbach2021reconfiguring, adaricheva2021reconfiguration}. A subset of vertices $S$ of a (finite, undirected) graph $G=(V,E)$ is a {\em dominating set} if and only if every vertex of $G$ is either in $S$ or adjacent to a vertex of $S$. In reconfiguration problems, dominating sets are often represented with tokens, where exactly one token is placed on each vertex in the dominating set. Then, a reconfiguration corresponds to shifting the token according to some rule. Reconfiguration of dominating sets has been mainly studied using one of three rules:

\begin{itemize}
\item[•] \underline{Token Addition and Removal:} one can add or remove a token;
\item[•] \underline{Token Jumping:} one can move a token to any vertex of the graph;
\item[•] \underline{Token Sliding:} one can slide a token along an edge.
\end{itemize}

In this paper we will investigate the reconfiguration of minimal dominating sets. A dominating set is considered \emph{minimal} if it does not contain a smaller dominating set as a proper subset. Equivalently, for a dominating set $M$, we say $M$ is minimal if $M-v$ is not a dominating set for all $v \in M$. Reconfiguration of minimum dominating sets has been studied \cite{edwards2018reconfiguring, finbow2019gamma, lemanska2020reconfiguring}. Although, dominating sets of minimum size are necessarily also minimal, there is not much known about reconfiguring a minimal dominating set to another minimal dominating set with a different size. The first challenge is that none of the three models mentioned above will suffice for this kind of reconfiguration. Token Jumping and Token Sliding do not alter the size of the dominating set. Token Addition and Removal can increase or decrease the size by one. However, the addition of a vertex to a minimal dominating set will by definition no longer be minimal. Moreover, the removal of a vertex to a minimal dominating set will by definition no longer be a dominating set. Additionally, many graphs have ``gaps" in their sizes of minimal dominating sets. The most extreme example is the graph $K_{1,n}$. It has exactly two minimal dominating sets , one of size $1$ and the other of size $n$, which are both shown in Figure \ref{fig:StarMinimal}. 

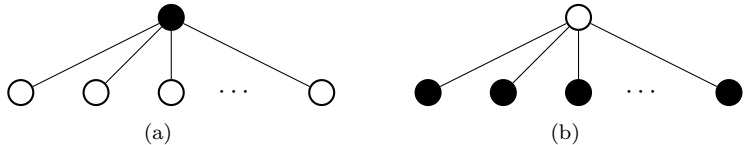
\begin{figure}[!h]
\def\c{1}
\centering
\subfigure[]{
\scalebox{\c}{
\begin{tikzpicture}
\begin{scope}[every node/.style={circle,thick,draw}]
    \node[shape=circle,draw=black,fill=black] (1) at (2,0) {};
    \node[shape=circle,draw=black,fill=white] (2) at (0,-1) {};
    \node[shape=circle,draw=black,fill=white] (3) at (1,-1) {};
    \node[shape=circle,draw=black,fill=white] (4) at (2,-1) {};
    \node[shape=circle,draw=black,fill=white] (5) at (4,-1) {};

\end{scope}

\begin{scope}
    \path [-] (1) edge node {} (2);
    \path [-] (1) edge node {} (3);
    \path [-] (1) edge node {} (4);
    \path [-] (1) edge node {} (5);

\end{scope}
\node[text width=0.75cm] at (3,-1) {$\cdots$};
\end{tikzpicture}}}
\qquad
\subfigure[]{
\scalebox{\c}{
\begin{tikzpicture}
\begin{scope}[every node/.style={circle,thick,draw}]
    \node[shape=circle,draw=black,fill=white] (1) at (2,0) {};
    \node[shape=circle,draw=black,fill=black] (2) at (0,-1) {};
    \node[shape=circle,draw=black,fill=black] (3) at (1,-1) {};
    \node[shape=circle,draw=black,fill=black] (4) at (2,-1) {};
    \node[shape=circle,draw=black,fill=black] (5) at (4,-1) {};

\end{scope}

\begin{scope}
    \path [-] (1) edge node {} (2);
    \path [-] (1) edge node {} (3);
    \path [-] (1) edge node {} (4);
    \path [-] (1) edge node {} (5);

\end{scope}

\node[text width=0.75cm] at (3,-1) {$\cdots$};
\end{tikzpicture}}}

\caption{The two minimal dominating sets of $K_{1,n}$}%
\label{fig:StarMinimal}%
\end{figure}

\noindent The ``rule(s)" required to reconfigure a minimal dominating set become more complicated than altering a single token. In this paper we propose a generalization of the token sliding model.

\begin{definition}[Expansion/Contraction Model]
\label{def:ReconfigRule}
A minimal dominating set $M_1$ may be reconfigured to another dominating set $M_2$ if there exists a vertex $v$ such that one of the following holds:

\begin{itemize}
\item (Expand) $M_2-M_1=\{v\}$ and $M_1-M_2 \subseteq N(v)$, or 
\item (Contract) $M_1-M_2=\{v\}$ and $M_2-M_1 \subseteq N(v)$.
\end{itemize}

\end{definition}

Note that when $|M_2-M_1| = |M_1-M_2|=1$ this is exactly the token sliding model. Under the expansion/contraction model, the two minimal dominating sets of $K_{1,n}$ shown in Figure \ref{fig:StarMinimal} are reconfigurations of each other. Intuitively, each reconfiguration can be thought of as a \emph{expansion} or \emph{contraction} of a vertex with respect to the minimal dominating set. For a minimal dominating set $M$ which contains vertex $v$, we say we can \emph{expand} $v$ if we can remove $v$ and add some of its neighbours to form a new minimal dominating set. Similarly, if a minimal dominating set $M$ does not contain $v$, we say we can \emph{contract} $v$ if we add $v$ and remove some of its neighbours to form a new minimal dominating set.

For reconfiguration problems it is common to study a corresponding auxiliary graph. The $k$-dominating graph (See \cite{haas2014k, haas2017reconfiguring, mynhardt2019connected, adaricheva2021reconfiguration, rautenbach2021reconfiguring}) has the dominating sets of order atmost $K$ as its vertex set where two dominating sets are adjacent if they can be reconfigured using under the token addition and removal model. The $\gamma$-graph of a graph (See \cite{finbow2019gamma, edwards2018reconfiguring}) has the minimum dominating sets as its vertex set where two minimum dominating sets are adjacent if they can be reconfigured using under the token sliding model. In this paper we will study the reconfiguration graph $\R(G)$. The vertex set of $\R(G)$ is the set of all minimal dominating sets in $G$ which we will denote $\M(G)$. Moreover two minimal dominating sets $M_1$ and $M_2$ are adjacent in $\R(G)$, denoted $M_1 \sim M_2$, if they satisfy the reconfiguration rule from Definition \ref{def:ReconfigRule}. As our model is a generalization of the token sliding model we get the following observation.

\begin{observation}
\label{obs:gamma}
For a graph $G$, $\R(G)$ contains the $\gamma$-graph of $G$ as an induced subgraph. In particular, if all minimal dominating sets are the same size then $\R(G)$ is exactly the $\gamma$-graph of $G$.
\end{observation}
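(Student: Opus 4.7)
The plan is to verify the two standard requirements for induced subgraph containment: vertex inclusion and preservation of (non-)adjacency. Since every minimum dominating set is by definition a minimal dominating set, the vertex set of the $\gamma$-graph is a subset of $\M(G)$. So the work is entirely about comparing the two adjacency relations when restricted to minimum dominating sets.

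For the forward direction, suppose $M_1$ and $M_2$ are minimum dominating sets adjacent in the $\gamma$-graph. Then $M_2-M_1=\{v\}$, $M_1-M_2=\{u\}$, and $uv\in E(G)$. This immediately satisfies the Expand clause of Definition \ref{def:ReconfigRule} with the chosen vertex $v$, since $\{u\}\subseteq N(v)$. Hence $M_1\sim M_2$ in $\R(G)$.

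For the reverse direction, which is the part that does the real work, I would exploit the cardinality constraint. Suppose $M_1, M_2$ are minimum dominating sets with $M_1\sim M_2$ in $\R(G)$. Without loss of generality the move is an Expand, so there is a vertex $v$ with $M_2-M_1=\{v\}$ and $M_1-M_2\subseteq N(v)$. Because $|M_1|=|M_2|$, the standard identity $|M_1\setminus M_2|=|M_2\setminus M_1|$ forces $|M_1-M_2|=1$; call this unique vertex $u$. Then $u\in N(v)$, so $uv\in E(G)$, which is exactly the token sliding adjacency in the $\gamma$-graph. Combined with the forward direction, this shows the $\gamma$-graph is an induced subgraph of $\R(G)$.

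The ``in particular'' statement then follows with no extra argument: if every minimal dominating set of $G$ has the same size, every element of $\M(G)$ is a minimum dominating set, so the vertex sets of $\R(G)$ and the $\gamma$-graph coincide, and the equivalence of adjacency relations established above gives equality as graphs. There is no real obstacle here; the only subtle point is recognizing that the size-matching condition forces $|M_1-M_2|=|M_2-M_1|=1$, which collapses the expansion/contraction rule back to token sliding.
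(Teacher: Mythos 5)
Your proof is correct and matches the paper's reasoning: the paper treats this observation as immediate from the fact that the expansion/contraction rule reduces to token sliding when $|M_1-M_2|=|M_2-M_1|=1$, which is exactly the cardinality argument you spell out. Your writeup simply makes explicit the details the paper leaves to the reader.
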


\noindent A few notable examples of graphs where $\R(G)$ is simply the $\gamma$-graph of $G$ are given below

\begin{theorem}
\label{thm:families1}
Let $n$ be a positive integer.

\begin{itemize}
\item[$(i)$] $\R(\overline{K_n})=K_1$.
\item[$(ii)$] $\R(K_n)=K_n$.
\item[$(iii)$] $\R(C_5)=C_5$.
\end{itemize}

\end{theorem}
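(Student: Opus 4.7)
\medskip

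\noindent\textbf{Proof proposal.} The plan is to handle each part by first enumerating $\mathcal{M}(G)$ explicitly and then checking adjacencies under Definition \ref{def:ReconfigRule}. Parts $(i)$ and $(ii)$ are essentially immediate: in $\overline{K_n}$ no vertex has a neighbour, so any dominating set must contain every vertex; hence $\mathcal{M}(\overline{K_n})=\{V\}$ and $\R(\overline{K_n})=K_1$. In $K_n$ every singleton $\{v\}$ is a dominating set and any dominating set of size $\ge 2$ fails to be minimal, so $\mathcal{M}(K_n)=\{\{v\}:v\in V\}$. For any two distinct singletons $\{u\},\{v\}$ the pair $(M_1,M_2)=(\{u\},\{v\})$ satisfies $M_2-M_1=\{v\}$ and $M_1-M_2=\{u\}\subseteq N(v)$, so by the Expand rule they are adjacent in $\R(K_n)$; this gives $\R(K_n)=K_n$.

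For $(iii)$ the first task is to show that $\mathcal{M}(C_5)$ consists precisely of the five pairs of non-adjacent vertices $\{v_i,v_{i+2}\}$ (indices mod $5$). It is easy to check each such pair dominates and is minimal. For the converse I would argue that no minimal dominating set of $C_5$ has size $\ge 3$: if $|S|\ge 3$ then $|V\setminus S|\le 2$, and a short case analysis on the structure of $V\setminus S$ shows that at least one vertex of $S$ is redundant (its closed neighbourhood is already covered by the remaining vertices). Thus every minimal dominating set has size $2$.

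Once all minimal dominating sets have the same size, Observation \ref{obs:gamma} reduces the problem to computing the $\gamma$-graph of $C_5$. Here I would fix $M=\{v_0,v_2\}$ and enumerate the single-token slides: sliding $v_0$ yields $\{v_1,v_2\}$ (not dominating, since $v_4$ is uncovered) or $\{v_4,v_2\}$ (dominating), and sliding $v_2$ yields $\{v_0,v_1\}$ (not dominating) or $\{v_0,v_3\}$ (dominating). So $\{v_0,v_2\}$ has exactly two neighbours in $\R(C_5)$, and by the rotational symmetry of $C_5$ every vertex has degree $2$. Tracing the walk
\[
\{v_0,v_2\}\sim\{v_0,v_3\}\sim\{v_1,v_3\}\sim\{v_1,v_4\}\sim\{v_2,v_4\}\sim\{v_0,v_2\}
\]
closes a $5$-cycle through all of $\mathcal{M}(C_5)$, so $\R(C_5)=C_5$.

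The only step that requires genuine work is ruling out minimal dominating sets of size $3$ or more in $C_5$; the rest is bookkeeping. I expect this case analysis, rather than any adjacency verification, to be the main (though minor) obstacle.
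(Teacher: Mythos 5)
Your proposal is correct and follows the only natural route, which is also the one the paper implicitly takes: the paper states this theorem without proof as a direct consequence of enumerating $\mathcal{M}(G)$ and applying Observation \ref{obs:gamma}, exactly as you do. The one step you flag as needing work, ruling out minimal dominating sets of size at least $3$ in $C_5$, collapses immediately once you note that $\omega(C_5)=2$, so any $3$-subset contains a non-adjacent pair, which already dominates.
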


This paper is structured as follows

\section{Constructions of $\R(G)$}
\label{sec:properites}

\noindent In this section we will give some constructions of $\R(G)$ for various families of graphs and graph operations. We begin with disjoint union of two graphs. If $G$ and $H$ are graph, let $G \cup H$ denote their disjoint union. Unsurprisingly, like the token-sliding model, we will show $\R(G \cup H)$ becomes the \emph{Cartesian product} of the reconfiguration graphs of $G$ and $H$. In general, the Cartesian product of two graphs $G$ and $H$, denoted $G \square H$, has vertex set $V(G) \times V(H)$. Two vertices $(u,v)$ and $(u',v')$ are adjacent in $G \square H$ when either

\begin{itemize}
\item $u=u'$ and $v \sim_G v'$, or
\item  $v=v'$ and $u \sim_H u'$.
\end{itemize}

\begin{theorem}
\label{thm:disjointunion}
Let $G$ and $H$ be graphs. Then $\R(G \cup H) = \R(G) \square \R(H)$.
\end{theorem}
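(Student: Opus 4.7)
The plan is to establish a bijection between $\M(G \cup H)$ and $\M(G) \times \M(H)$ and then verify that the adjacency relation corresponds exactly to that of the Cartesian product.

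First I would establish that $M \subseteq V(G) \cup V(H)$ is a minimal dominating set of $G \cup H$ if and only if $M_G := M \cap V(G) \in \M(G)$ and $M_H := M \cap V(H) \in \M(H)$. The domination half is immediate: since $G$ and $H$ share no edges, a vertex in $V(G)$ can only be dominated by a vertex of $M_G$, and similarly for $H$. For minimality, one observes that removing $v \in M_G$ from $M$ fails to dominate $G \cup H$ iff it fails to dominate $G$, so minimality of $M$ in $G \cup H$ is equivalent to minimality of $M_G$ in $G$ and of $M_H$ in $H$. This gives the required vertex bijection $M \leftrightarrow (M_G, M_H)$ between $\R(G \cup H)$ and $\R(G) \square \R(H)$.

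Next I would verify that edges are preserved in both directions. Suppose $M_1 \sim M_2$ in $\R(G \cup H)$, witnessed (say, by expansion) by a vertex $v$ with $M_2 - M_1 = \{v\}$ and $M_1 - M_2 \subseteq N(v)$. The key observation is that $N(v)$ lies entirely in the component containing $v$, so the symmetric difference $M_1 \triangle M_2$ is contained in either $V(G)$ or $V(H)$. If $v \in V(G)$, then $M_1 \cap V(H) = M_2 \cap V(H)$, and the same $v$ witnesses the expansion $(M_1)_G \sim (M_2)_G$ in $\R(G)$; the case $v \in V(H)$ is symmetric. This is precisely the form of an edge in $\R(G) \square \R(H)$. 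Conversely, an edge of $\R(G) \square \R(H)$ fixes one coordinate and moves the other via an expansion or contraction in the relevant factor; since the witnessing vertex $v$ and its neighbourhood still lie in a single component, the same $v$ witnesses the corresponding expansion/contraction step in $\R(G \cup H)$, using the fact (from the first step) that the resulting pair of projected minimal dominating sets reassembles into a minimal dominating set of $G \cup H$.

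The main obstacle is essentially bookkeeping: making sure that the minimality condition really does factor across the disjoint union, and that the expansion/contraction rule of Definition \ref{def:ReconfigRule} inherits this factorization because $N(v)$ is confined to one component. Once these two observations are in place, both directions of the correspondence follow directly from the definitions, and no further computation is needed.
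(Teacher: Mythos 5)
Your proposal is correct and follows essentially the same route as the paper's proof: factor each minimal dominating set of $G \cup H$ into its $G$- and $H$-parts, and observe that any expansion or contraction is confined to a single component since $N(v)$ is. Your write-up simply spells out the bookkeeping that the paper leaves implicit.
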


\begin{proof}
Note $M$ is a minimal dominating set of $G \cup H$ if and only if $M_G$ and $M_H$ are minimal dominating sets in $G$ and $H$ respectively.
Let $M=(M_G, M_H)$ where $M_G \in \M(G)$ and $M_H = \M(H)$.
Moreover, note that we can only expand or contract a vertex in either $G$ or $H$.
Thus $(M_G, M_H) \sim (M_G', M_H')$ in $\R(G \cup H)$ if and only if either $M_G=M_G'$ and $M_H \sim M_H'$ in $\R(H)$, or $M_H=M_H'$ and $M_G \sim M_G'$ in $\R(G)$.
\end{proof}

A special case of Theorem \ref{thm:disjointunion} is when one of the graphs is the empty graph $\overline{K_n}$. Recall that $\R(\overline{K_n})=K_1$. Moreover, the Cartesian product of any graph $G$ with $K_1$ is just $G$. This gives a useful, and intuitive, corollary. 

\begin{corollary}
Let $G$ and $n$ a positive integer. Then $\R(G \cup \overline{K_n}) = \R(G)$.
\end{corollary}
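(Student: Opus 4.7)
The plan is to chain together the two results that immediately precede the corollary. By Theorem \ref{thm:disjointunion}, I can rewrite $\R(G \cup \overline{K_n})$ as a Cartesian product $\R(G) \,\square\, \R(\overline{K_n})$, which reduces the problem to analyzing what the second factor contributes.

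Next, I would invoke Theorem \ref{thm:families1}(i), which states $\R(\overline{K_n}) = K_1$. Substituting this in gives $\R(G \cup \overline{K_n}) = \R(G) \,\square\, K_1$. All that remains is the well-known (and easy to verify from the definition of $\square$) fact that the Cartesian product of any graph with $K_1$ is isomorphic to the graph itself: if $K_1$ has the single vertex $w$, then the vertex set of $\R(G) \,\square\, K_1$ is $V(\R(G)) \times \{w\}$, and $(M,w) \sim (M',w)$ precisely when $M \sim M'$ in $\R(G)$, giving the obvious isomorphism $M \mapsto (M,w)$.

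There is no real obstacle here, since the corollary is explicitly advertised in the paragraph preceding it as an immediate consequence of Theorem \ref{thm:disjointunion} combined with $\R(\overline{K_n}) = K_1$. The only thing worth a sentence in the writeup is a pointer to (or a one-line verification of) the triviality $G \,\square\, K_1 \cong G$, since it is used silently.
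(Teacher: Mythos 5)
Your proposal is correct and matches the paper's reasoning exactly: the paper derives the corollary by applying Theorem \ref{thm:disjointunion}, recalling $\R(\overline{K_n})=K_1$ from Theorem \ref{thm:families1}$(i)$, and using the fact that the Cartesian product of any graph with $K_1$ is the graph itself. Your added one-line verification of $G \,\square\, K_1 \cong G$ is a reasonable bit of extra care but does not change the argument.
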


For joins of two graphs we must deal with the case where $\gamma(G)=1$ and $\gamma(G) \geq 2$ (i.e. graphs with or without universal vertices) separately. We first consider graphs with universal vertices. 

\begin{theorem}
\label{thm:joinK1}
Let $G$ be a graph. Then $\R(G \vee K_1) =  \R(G) \vee K_1$.
\end{theorem}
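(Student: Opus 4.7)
Let $u$ denote the vertex of $K_1$ that becomes universal in $G \vee K_1$. My plan is to show the equality by characterizing the vertex set of $\R(G \vee K_1)$ and then analyzing the edges in three cases.

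\textbf{Step 1 (characterize $\M(G \vee K_1)$).} I would first show that $\M(G \vee K_1) = \M(G) \cup \{\{u\}\}$. If $u \in M$, then since $u$ is universal, $\{u\}$ already dominates $G \vee K_1$, so minimality forces $M = \{u\}$. If $u \notin M$, then $M \subseteq V(G)$; since $u$ is universal, $u$ is dominated by any nonempty $M$, so $M$ dominates $G \vee K_1$ iff $M$ dominates $G$. The same equivalence holds for minimality: removing $v \in M$ leaves some $w \in V(G)$ undominated in $G$ iff it leaves $w$ undominated in $G \vee K_1$ (the only new neighbor of $w$ is $u$, which is not in $M - v$).

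\textbf{Step 2 (edges inside $\M(G)$).} I would verify that the induced subgraph of $\R(G \vee K_1)$ on $\M(G)$ is exactly $\R(G)$. Suppose $M_1, M_2 \in \M(G)$. Any witnessing vertex $v$ for $M_1 \sim M_2$ in $\R(G \vee K_1)$ must satisfy $v \in M_1 \triangle M_2 \subseteq V(G)$, so $v \neq u$. Since $M_1, M_2 \subseteq V(G)$, we have $(M_1 - M_2) \cup (M_2 - M_1) \subseteq V(G)$, and so the containment $M_i - M_j \subseteq N_{G \vee K_1}(v)$ is equivalent to $M_i - M_j \subseteq N_G(v)$. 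Hence $M_1 \sim M_2$ in $\R(G \vee K_1)$ iff $M_1 \sim M_2$ in $\R(G)$.

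\textbf{Step 3 ($\{u\}$ is adjacent to everything in $\M(G)$).} For any $M \in \M(G)$, I would take $v = u$ as a witness: $\{u\} - M = \{u\}$ and $M - \{u\} = M \subseteq V(G) = N_{G \vee K_1}(u)$, which is precisely the Contract condition of Definition \ref{def:ReconfigRule}. Hence $\{u\}$ is adjacent to every vertex of $\R(G)$ in $\R(G \vee K_1)$.

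Combining the three steps, $\R(G \vee K_1)$ is obtained from $\R(G)$ by adding one new vertex $\{u\}$ adjacent to every existing vertex, which is precisely $\R(G) \vee K_1$. The main (and really only) obstacle is Step 1 — once the minimal dominating sets are correctly catalogued, the edge analysis is straightforward because $u$ being universal makes the relevant neighborhood conditions automatic.
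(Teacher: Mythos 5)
Your proof is correct and takes essentially the same route as the paper's: characterize $\M(G \vee K_1)$ as $\M(G) \cup \{\{u\}\}$, show $\{u\}$ is adjacent to every set in $\M(G)$, and check that adjacency within $\M(G)$ is unchanged; you simply supply the details that the paper asserts without elaboration. The only (immaterial) slip is in Step 3, where the move from $M$ to $\{u\}$ formally instantiates the Expand clause of Definition~\ref{def:ReconfigRule} with witness $u$ rather than the Contract clause as you label it, but since the two clauses are symmetric in $M_1$ and $M_2$ this does not affect adjacency.
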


\begin{proof}
Let $x$ be the one vertex in $K_1$.
Every minimal dominating set in $G$ is still minimal in $K_1 \vee G$.
Thus the minimal dominating sets of $K_1 \vee G$ are exactly the sets in $\M(G)$ and $\{x\}$.
Any minimal dominating set in $\M(G)$ can be contracted to $\{x\}$.
Thus $\{x\}$ is a universal vertex in $\R(K_1 \vee G)$.
Moreover two minimal dominating sets in $\M(G)$ are adjacent in $\R(K_1 \vee G)$ if and only if they were adjacent in $\R(G)$.
\end{proof}

If a graph $G$ has a universal vertex $v$, then $v$ will also be universal if $G$ is joined to any graph $H$. Therefore it suffices to consider the joins of two graphs without universal vertices. 

\begin{theorem}
\label{thm:joinGeneral}
Let $G$ and $H$ be graphs each with no universal vertices. Then $\R(G \vee H)$ is the graph formed by joining $\R(G) \cup \R(H)$ and $G \square H$ where $M \in \M(G) \cup \M(H)$ is adjacent to $(u,v) \in V(G \square H)$ if and only if $M \cap \{u,v\} \neq \emptyset$.
\end{theorem}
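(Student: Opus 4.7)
The plan is to first determine $\M(G \vee H)$ explicitly, and then verify the adjacency rule for each pair of types of minimal dominating sets. Throughout, I will use that in $G \vee H$, every vertex $u \in V(G)$ is adjacent to every vertex of $H$ (and vice versa), so any single vertex of $G$ already dominates all of $V(H)$.

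For the characterization of $\M(G \vee H)$, I would split a minimal dominating set $M$ into $M_G = M \cap V(G)$ and $M_H = M \cap V(H)$ and check three cases. If $M_G = \emptyset$ or $M_H = \emptyset$, I claim $M$ is a minimal dominating set of $G$ (respectively $H$): it dominates $G \vee H$ iff it dominates $G$ (the $H$-side is free), and removing any vertex destroys domination on the same side. If both $M_G, M_H$ are nonempty, then $M$ already dominates $G \vee H$ as soon as $|M_G| \ge 1$ and $|M_H| \ge 1$, so minimality forces $|M_G| = |M_H| = 1$; the resulting pair $\{u,v\}$ with $u \in V(G)$, $v \in V(H)$ is minimal iff neither $\{u\}$ nor $\{v\}$ dominates $G \vee H$ alone, which is exactly the hypothesis that $G$ and $H$ have no universal vertices. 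Thus $\M(G \vee H) = \M(G) \sqcup \M(H) \sqcup (V(G) \times V(H))$, and along the way I observe $|M| \ge 2$ for every $M \in \M(G) \cup \M(H)$.

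Next, I will verify each type of adjacency. For two sets $M_1, M_2 \in \M(G)$, an expansion/contraction of $v$ in $\R(G \vee H)$ must pick $v \in V(G)$ (since the symmetric difference is inside $V(G)$); because $N_{G \vee H}(v) \cap V(G) = N_G(v)$, the rule applied in $G \vee H$ is identical to the rule in $\R(G)$. The same argument works for $\M(H)$. For two hybrid sets $\{u, v\}$ and $\{u', v'\}$, a direct count shows the symmetric difference has size $2$ exactly when $u = u'$ or $v = v'$, and in each case the expand/contract condition reduces to $v \sim_H v'$ or $u \sim_G u'$, matching the Cartesian product $G \square H$ exactly. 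Finally I will rule out edges between $\M(G)$ and $\M(H)$: such a reconfiguration would require one side to be a singleton, contradicting $|M| \ge 2$ on each side.

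The remaining case is the mixed edges from $M \in \M(G) \cup \M(H)$ to a hybrid set $\{u, v\}$; I will do $M \in \M(G)$ (so $M \cap \{u,v\} \subseteq \{u\}$). Splitting on which side of the symmetric difference is the singleton: the "add $v$, shed some $G$-vertices" direction is available exactly when $u \in M$ (since $M - \{u\} \subseteq V(G) \subseteq N_{G \vee H}(v)$ automatically); the "remove one $G$-vertex, gain $v$" direction requires $|M| = 2$ with $u \in M$, which is already covered. So the edge exists iff $u \in M$, i.e.\ $M \cap \{u,v\} \neq \emptyset$, matching the theorem's rule. The argument for $M \in \M(H)$ is symmetric.

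I expect the main obstacle to be purely bookkeeping: making sure the three cases of the symmetric difference (all in $V(G)$, all in $V(H)$, or straddling both sides) are each matched to the correct "region" of the claimed graph, and that the no-universal-vertex assumption is used at every place it is needed — namely to prevent singleton minimal dominating sets in $\M(G) \cup \M(H)$ and to ensure every pair $(u,v) \in V(G) \times V(H)$ really is minimal. Once those are pinned down, each adjacency check is a one-line verification using $N_{G \vee H}(v) = N_G(v) \cup V(H)$ or its symmetric counterpart.
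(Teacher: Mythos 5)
Your proposal is correct and takes essentially the same route as the paper's proof: characterize $\M(G \vee H)$ as $\M(G) \cup \M(H) \cup (V(G) \times V(H))$, use the no-universal-vertex hypothesis to force $|M| \geq 2$ for each $M \in \M(G) \cup \M(H)$, and then verify each adjacency type directly from the expansion/contraction rule. The only difference is that you prove the characterization of $\M(G \vee H)$ in detail where the paper merely asserts it, which is a welcome but not structurally different addition.
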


\begin{proof}
The minimal dominating sets of $G \vee H$ are exactly the sets in $\M(G)$, $\M(H)$, and $V(G) \times V(H)$.
As neither $G$ nor $H$ have universal vertices then $\gamma(G), \gamma(H) \geq 2$.
Thus every set in $\M(G)$ and $\M(H)$ has at least two elements.
Therefore in $\R(G \vee H)$, no set of $\M(G)$ is adjacent to any set of $\M(H)$.

Now consider $(u,v) \in V(G) \times V(H)$ and $M \in \M(G) \cup \M(H)$.
Without loss of generality let $M \in \M(G)$.
Note that $v \notin M$ and $|M| \geq 2$.
Thus $M \cap \{u,v\} \neq \emptyset$ if and only if $u \in M$.
If $u \in M$ then $v$ can  be expanded to $M-\{u\}$.
Thus $\{u,v\} \sim M$ in $\R(G \vee H)$.
If $u \notin M$ then $|M-\{u,v\}|\geq 2$ and $|\{u,v\}-M| \geq 2$.
Therefore $\{u,v\} \not\sim M$ in $\R(G \vee H)$.
Therefore $M \in \M(G) \cup \M(H)$ is adjacent to $(u,v) \in V(G \square H)$ if and only if $M \cap \{u,v\} \neq \emptyset$.

Lastly we consider the subgraphs of $\R(G \vee H)$ induced by $\M(G)$, $\M(H)$, and $V(G) \times V(H)$.
Clearly $\M(G)$ and $\M(H)$ will induce $\R(G)$ and $\R(H)$ respectively in $\R(G \vee H)$.
Now let $(u,v),(u',v') \in V(G) \times V(H)$.
Note that $(u,v) \sim (u',v')$ in $\R(G \vee H)$ if and only if either $u=u'$ and $v \sim v'$ in $H$, or $v=v'$ and $u \sim u'$ in $G$.
Therefore $V(G) \times V(H)$ induces $G \square H$ in $\R(G \vee H)$.
\end{proof}

If either $G$ or $H$ has a universal vertex, we can construct $\R(G \vee H)$ as follows. Suppose there are $r$ universal vertices between both $G$ and $H$. First remove all universal vertices from $G$ and $H$ to form $G'$ and $H'$ respectively. Then use Theorem \ref{thm:joinGeneral} to obtain $\R(G' \vee H')$. Then add the $r$ universal vertices by iterating Theorem \ref{thm:joinK1} to get $\R(G \vee H) = \R(G' \vee H') \vee K_r$.

We now consider the reconfiguration graphs of certain families of graphs. We begin with complete bipartite graphs. 

\begin{theorem}
\label{thm:Kmn}
Let $m$ and $n$ be positive integers. Then

\begin{itemize}
\item[$(i)$] $\R(K_{1,n})=K_2$.
\item[$(ii)$] If $m,n \geq 2$ then $\R(K_{m,n})=K_{2,mn}$.
\end{itemize}
\end{theorem}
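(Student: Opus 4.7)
The plan is to reduce both parts to the join theorems already established earlier in the section.

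For (i), observe that $K_{1,n} = K_1 \vee \overline{K_n}$. Applying Theorem \ref{thm:joinK1} together with $\R(\overline{K_n}) = K_1$ from Theorem \ref{thm:families1}(i) yields
$\R(K_{1,n}) = \R(\overline{K_n}) \vee K_1 = K_1 \vee K_1 = K_2$.

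For (ii), write $K_{m,n} = \overline{K_m} \vee \overline{K_n}$. Since $m,n \geq 2$, neither factor has a universal vertex, so Theorem \ref{thm:joinGeneral} applies. The first step is to identify the minimal dominating sets of each factor: since $\overline{K_m}$ has no edges, its only dominating set (and therefore its only minimal dominating set) is $A := V(\overline{K_m})$, and similarly $\M(\overline{K_n}) = \{B\}$ with $B := V(\overline{K_n})$. Consequently $\R(\overline{K_m}) \cup \R(\overline{K_n})$ consists of the two isolated vertices $A$ and $B$, and the Cartesian product $\overline{K_m} \square \overline{K_n}$ is edgeless on $mn$ vertices.

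With these pieces in hand, the adjacencies in $\R(K_{m,n})$ can be read straight off Theorem \ref{thm:joinGeneral}: there is no edge among the $mn$ vertices of $V(G) \times V(H)$; there is no edge between $A$ and $B$; and for any pair $(u,v) \in V(G) \times V(H)$ we have $u \in A$ and $v \in B$, so $A \cap \{u,v\} \neq \emptyset$ and $B \cap \{u,v\} \neq \emptyset$. Thus both $A$ and $B$ are adjacent to every such pair, which is exactly $K_{2,mn}$ with parts $\{A,B\}$ and $V(G) \times V(H)$.

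There is no real obstacle here: once the minimal dominating sets of the edgeless factors are correctly identified, the structure of $\R(K_{m,n})$ is forced by the general join theorem.
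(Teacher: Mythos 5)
Your proof is correct and follows essentially the same route as the paper: part (i) via Theorem \ref{thm:joinK1} applied to $K_1 \vee \overline{K_n}$, and part (ii) via Theorem \ref{thm:joinGeneral} applied to $\overline{K_m} \vee \overline{K_n}$, noting that the unique minimal dominating set of each edgeless factor meets every pair $(u,v)$. Your write-up is slightly more explicit about reading off the adjacencies, but there is no substantive difference.
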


\begin{proof}
$(i)$ Note that $K_{1,n} \cong K_1 \vee \overline{K_n}$.
Moreover $\R(\overline{K_n})=K_1$.
Therefore from Theorem \ref{thm:joinK1} we have $\R(K_{1,n})=K_2$.

\noindent $(ii)$ Again note that $K_{m,n} \cong \overline{K_m} \vee \overline{K_n}$.
Additionally, $\R(\overline{K_m})=K_1$ and $\R(\overline{K_n})=K_1$ with the only minimal dominating sets being $V(\overline{K_m})$ and $V(\overline{K_n})$ respectively.
Note that every $(u,v) \in V(\overline{K_m} \square \overline{K_n})$ intersects with both $V(\overline{K_m})$ and $V(\overline{K_n})$.
Moreover $\overline{K_m} \square \overline{K_n}= \overline{K_{mn}}$.
Therefore by Theorem \ref{thm:joinGeneral} we have that $\R(K_{m,n})=(K_1 \cup K_1) \vee \overline{K_{mn}} \cong K_{2,mn}$.
\end{proof}

We will now consider complete multipartite graphs. First we must introduce a family of graphs which we will show are the reconfiguration graphs of complete multipartite graphs. The \emph{$n$ by $n$ rook's graph} is the graph $K_n \square K_n$ which represents the moves a rook can make from each square of an $n$ by $n$ chessboard. Equivalently if we let the set $\{(i,j):1 \leq i,j \leq n\}$ denote the vertices of the rook's graph then $(i,j) \sim (k,\ell)$ if and only if $i=k$ or $j=\ell$. We call \emph{$n$ by $n$ folded rook's graph} the graph which represents the moves a rook can make from each square of chessboard which has been folded along its diagonal. Let the set $\{(i,j):1 \leq j \leq i \leq n\}$ denote the vertices of the folded rook's graph. Then $(i,j) \sim (k,\ell)$ if and only if $\{i,j\} \cap \{k, \ell \} \neq \emptyset$. This is equivalent to taking the rook's graph and sequentially contracting the vertex $(i,j)$ to the vertex $(j,i)$. Examples of 3 by 3 rook's graphs and folded rook's graphs are given in Figure \ref{fig:rookgraph}

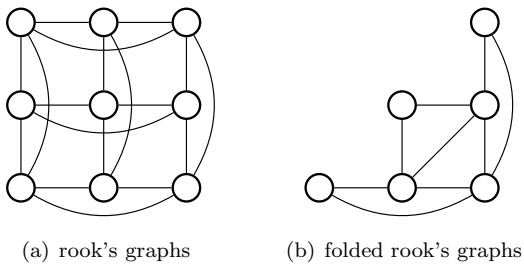
\begin{figure}[!h]
\def\c{1.1}
\centering
\subfigure[rook's graphs]{
\scalebox{\c}{
\begin{tikzpicture}
\begin{scope}[every node/.style={circle,thick,draw}]
    \node[shape=circle,draw=black,fill=white] (00) at (0,0) {};
    \node[shape=circle,draw=black,fill=white] (01) at (0,1) {};
    \node[shape=circle,draw=black,fill=white] (02) at (0,2) {};
    \node[shape=circle,draw=black,fill=white] (10) at (1,0) {};
    \node[shape=circle,draw=black,fill=white] (11) at (1,1) {};
    \node[shape=circle,draw=black,fill=white] (12) at (1,2) {};
    \node[shape=circle,draw=black,fill=white] (20) at (2,0) {};
    \node[shape=circle,draw=black,fill=white] (21) at (2,1) {};
    \node[shape=circle,draw=black,fill=white] (22) at (2,2) {};

\end{scope}

\begin{scope}
    \path [-] (00) edge node {} (01);
    \path [-] (01) edge node {} (02);
    \path[bend right=30] [-] (00) edge node {} (02);
    \path [-] (10) edge node {} (11);
    \path [-] (11) edge node {} (12);
    \path[bend right=30] [-] (10) edge node {} (12);
    \path [-] (20) edge node {} (21);
    \path [-] (21) edge node {} (22);
    \path[bend right=30] [-] (20) edge node {} (22);

    \path [-] (00) edge node {} (10);
    \path [-] (10) edge node {} (20);
    \path[bend right=30] [-] (00) edge node {} (20);
    \path [-] (01) edge node {} (11);
    \path [-] (11) edge node {} (21);
    \path[bend right=30] [-] (01) edge node {} (21);
    \path [-] (02) edge node {} (12);
    \path [-] (12) edge node {} (22);
    \path[bend right=30] [-] (02) edge node {} (22);
\end{scope}

\end{tikzpicture}}}
\qquad
\subfigure[folded rook's graphs]{
\scalebox{\c}{
\begin{tikzpicture}
\begin{scope}[every node/.style={circle,thick,draw}]

    \node[shape=circle,draw=black,fill=white] (00) at (0,0) {};
    \node[shape=circle,draw=black,fill=white] (11) at (1,1) {};
    \node[shape=circle,draw=black,fill=white] (10) at (1,0) {};
    \node[shape=circle,draw=black,fill=white] (22) at (2,2) {};
    \node[shape=circle,draw=black,fill=white] (21) at (2,1) {};
    \node[shape=circle,draw=black,fill=white] (20) at (2,0) {};

\end{scope}

\begin{scope}
    \path [-] (11) edge node {} (10);
    \path [-] (22) edge node {} (21);
    \path [-] (21) edge node {} (20);
    \path[bend left=30] [-] (22) edge node {} (20);

    \path [-] (11) edge node {} (21);
    \path [-] (00) edge node {} (10);
    \path [-] (10) edge node {} (20);
    \path[bend right=30] [-] (00) edge node {} (20);
    
    \path [-] (21) edge node {} (10);
\end{scope}
\end{tikzpicture}}}

\caption{3 by 3 rook's graph and folded rook's graphs}%
\label{fig:rookgraph}%
\end{figure}

Let  $n_1, \ldots, n_\ell$ be positive integers. We define the \emph{altered folded rook's graph} $AFR(n_1, \ldots, n_\ell)$ in the following way.
Let $n=n_1+ \ldots+ n_\ell$ and $N=\{1,2,\ldots n\}$.
Moreover let $N_k$ be the set of $n_k$ integers from $n_1+ \cdots n_k+1$ to $n_1+ \cdots n_{k+1}$.
$AFR(n_1, \ldots, n_\ell)$ is constructed from an $n$ by $n$ folded rook's graph with the following alterations:

\begin{itemize}
\item For each $k$, the vertices in $N_k \times N_k$ are contracted to a single vertex. Relabel this vertex to be $k$.
\item For each $k \neq m$, the edges between any two vertices in $N_k \times N_m$ are removed.
\end{itemize}

\noindent Note that $AFR(m,n) \cong K_{2,mn}$. Figure \ref{fig:alteredfoldedrookgraph} shows $(a)$ a 4 by 4 folded rook's graph and $(b)$ the altered folded rook's graph $AFR(1,1,2)$.

\begin{figure}[!h]
\def\c{1}
\centering
\subfigure[4 by 4 folded rook's graphs]{
\scalebox{\c}{
\begin{tikzpicture}
\begin{scope}[every node/.style={circle,thick,draw}]
    
    \node[shape=circle,draw=black,fill=white] (00) at (0,0) {(1,1)};
    \node[shape=circle,draw=black,fill=white] (10) at (2,0) {(2,1)};
    \node[shape=circle,draw=black,fill=white] (20) at (4,0) {(3,1)};
    \node[shape=circle,draw=black,fill=white] (30) at (6,0) {(4,1)};
    
    \node[shape=circle,draw=black,fill=white] (11) at (2,2) {(2,2)};
    \node[shape=circle,draw=black,fill=white] (21) at (4,2) {(3,2)};
    \node[shape=circle,draw=black,fill=white] (31) at (6,2) {(4,2)};
    
    \node[shape=circle,draw=black,fill=white] (22) at (4,4) {(3,3)};
    \node[shape=circle,draw=black,fill=white] (32) at (6,4) {(4,3)};

    \node[shape=circle,draw=black,fill=white] (33) at (6,6) {(4,4)};

\end{scope}

\begin{scope}
    \path [-] (00) edge node {} (10);
    \path[bend right=30] [-] (00) edge node {} (20);
    \path[bend right=30] [-] (00) edge node {} (30);
    \path [-] (10) edge node {} (20);
    \path[bend right=30] [-] (10) edge node {} (30);
    \path [-] (20) edge node {} (30);    
    \path [-] (11) edge node {} (21);
    \path[bend right=30] [-] (11) edge node {} (31);
    \path [-] (21) edge node {} (31);       
    \path [-] (22) edge node {} (32); 
    
    \path [-] (33) edge node {} (32);
    \path[bend left=30] [-] (33) edge node {} (31);
    \path[bend left=30] [-] (33) edge node {} (30);
    \path [-] (32) edge node {} (31);
    \path[bend left=30] [-] (32) edge node {} (30);
    \path [-] (31) edge node {} (30);   
    \path [-] (22) edge node {} (21);
    \path[bend left=30] [-] (22) edge node {} (20);
    \path [-] (21) edge node {} (20);        
    \path [-] (11) edge node {} (10); 
    
    \path [-] (10) edge node {} (21);
    \path [-] (10) edge node {} (31);
    
    \path [-] (32) edge node {} (21);
    \path [-] (32) edge node {} (20);

\end{scope}

\end{tikzpicture}}}
\qquad
\subfigure[$AFR(1,1,2)$]{
\scalebox{\c}{
\begin{tikzpicture}
\begin{scope}[every node/.style={circle,thick,draw}]
    
    \node[shape=circle,draw=black,fill=white] (00) at (0,0) {1};
    \node[shape=circle,draw=black,fill=white] (10) at (2,0) {(2,1)};
    \node[shape=circle,draw=black,fill=white] (20) at (4,0) {(3,1)};
    \node[shape=circle,draw=black,fill=white] (30) at (6,0) {(4,1)};
    
    \node[shape=circle,draw=black,fill=white] (11) at (2,2) {2};
    \node[shape=circle,draw=black,fill=white] (21) at (4,2) {(3,2)};
    \node[shape=circle,draw=black,fill=white] (31) at (6,2) {(4,2)};
    

    \node[shape=circle,draw=black,fill=white] (33) at (5,5) {3};

\end{scope}

\begin{scope}
    \path [-] (00) edge node {} (10);
    \path[bend right=30] [-] (00) edge node {} (20);
    \path[bend right=30] [-] (00) edge node {} (30);
    \path [-] (10) edge node {} (20);
    \path[bend right=30] [-] (10) edge node {} (30);
 
    \path [-] (11) edge node {} (21);
    \path[bend right=30] [-] (11) edge node {} (31);

    \path [-] (31) edge node {} (30);   
    \path [-] (21) edge node {} (20);        
    \path [-] (11) edge node {} (10); 
    
    \path [-] (10) edge node {} (21);
    \path [-] (10) edge node {} (31);

    \path [-] (33) edge node {} (21);
    \path[bend left=10] [-] (33) edge node {} (20);
    \path [-] (33) edge node {} (31);
    \path[bend right=10] [-] (33) edge node {} (30);
\end{scope}
\end{tikzpicture}}}

\caption{Examples of folded and altered folded rook's graph}
\label{fig:alteredfoldedrookgraph}%
\end{figure}
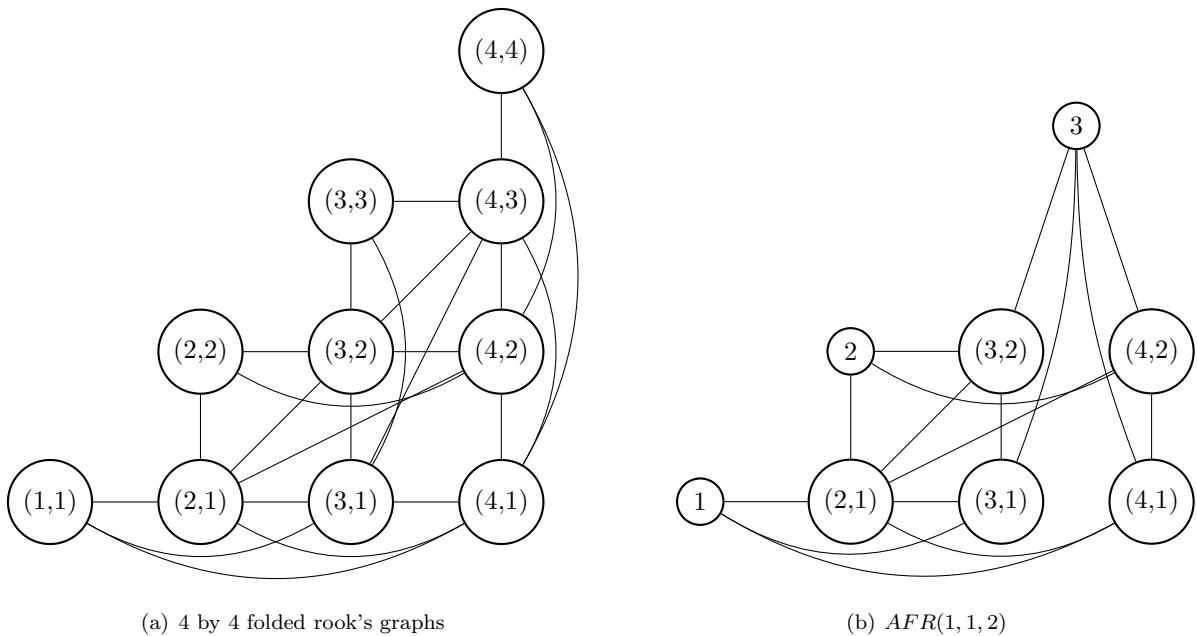

For complete multipartite graphs $K_{n_1, \ldots, n_\ell}$, we need only to consider cases where each $n_i \geq 2$. Otherwise if some $n_i=1$ then we would have a universal vertex and hence Theorem \ref{thm:joinK1} would apply.

\begin{theorem}
\label{thm:MultiParitite}
Let $\ell, n_1, \ldots n_\ell \geq 2$ be integers. Then $\R(K_{n_1, \ldots, n_\ell})=AFR(n_1, \ldots, n_\ell)$.
\end{theorem}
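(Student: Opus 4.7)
The plan is to describe $\M(K)$ where $K = K_{n_1, \ldots, n_\ell}$, work out the adjacencies in $\R(K)$, and then exhibit an edge-preserving bijection with $AFR(n_1, \ldots, n_\ell)$. Let $V_1, \ldots, V_\ell$ denote the parts of $K$. First I would classify $\M(K)$: since each $V_k$ is independent, a subset contained in $V_k$ dominates $V_k$ only if it equals $V_k$, and $V_k$ itself is easily seen to be minimal. If instead $M$ meets at least two parts then $M$ already dominates $K$, so minimality reduces to ruling out redundant vertices; a short case analysis shows such a redundancy is forced whenever $M$ meets some part twice or meets three or more parts. Combined with $n_i \geq 2$, this leaves exactly the pairs $\{u, w\}$ with $u \in V_i$, $w \in V_j$, $i \neq j$, in addition to the full parts.

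Next I would determine the adjacencies using Definition \ref{def:ReconfigRule}. Two full parts $V_k$ and $V_j$ cannot be adjacent because $|V_k \setminus V_j| \geq 2$ and $|V_j \setminus V_k| \geq 2$, ruling out both cases of the rule. For $V_k$ versus a pair $\{u, w\}$ with $u \in V_i$, $w \in V_j$, $i \neq j$: if some vertex of $\{u, w\}$ lies in $V_k$, say $u \in V_k$, then the Expand rule applies with $v = w$, since $V_k - \{u\} \subseteq V_k \subseteq N(w)$; conversely if $\{u, w\}$ misses $V_k$ then both $|V_k \setminus \{u, w\}|$ and $|\{u, w\} \setminus V_k|$ are at least $2$, blocking the rule. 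For two distinct pairs $\{u, w\}$ and $\{u', w'\}$, adjacency requires one of $|M_1 \setminus M_2|$ or $|M_2 \setminus M_1|$ to equal $1$, which since both sets have size two forces $|M_1 \cap M_2| = 1$; naming the shared vertex $u = u'$, adjacency then reduces to $w$ and $w'$ being neighbors in $K$, i.e.\ lying in distinct parts.

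Finally, fixing any bijection $V_k \leftrightarrow N_k$ induces a map $\phi : \M(K) \to V(AFR(n_1, \ldots, n_\ell))$ sending $V_k \mapsto k$ and $\{u, w\} \mapsto$ the corresponding pair. The classification above and the vertex count of $AFR$ make $\phi$ a bijection, and the adjacency analysis matches edges under $\phi$: contracting $N_k \times N_k$ in the folded rook's graph encodes that $V_k$ is adjacent precisely to the pairs meeting $N_k$, while deleting the edges inside $N_k \times N_m$ encodes that two pairs sharing one vertex whose non-shared endpoints lie in the same part are not adjacent. The main obstacle will be the adjacency analysis of the second step, particularly ensuring that no stray edge is created by the Contract direction when $n_k = 2$ (where $V_k$ has only two elements and the Contract rule can delete one of them while adding many neighbours); a careful accounting of the possible choices of the witness vertex $v$ and the resulting $M_2$ handles this in each subcase.
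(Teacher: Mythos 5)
Your proposal is correct and follows essentially the same route as the paper: classify $\M(K_{n_1,\ldots,n_\ell})$ as the full parts together with the cross-part pairs, compute the adjacencies from Definition~\ref{def:ReconfigRule}, and match them against the construction of $AFR(n_1,\ldots,n_\ell)$ via the obvious bijection. The only difference is that you justify the classification of the minimal dominating sets and verify both the Expand and Contract directions explicitly, which the paper largely asserts; your worry about stray Contract edges when $n_k=2$ resolves exactly as you anticipate, since a pair disjoint from a part always differs from it in two vertices on each side.
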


\begin{proof}
For simplicity let $G$ denote $K_{n_1, \ldots, n_\ell}$.
Let $N_1, \ldots, N_\ell$ denote the vertices in each part of $G$.
The minimal dominating sets of $G$ are exactly the sets $N_1, \ldots, N_\ell$, and the pairs $\{i,j\}$ where $i \in N_k$ and $j \in N_m$ with $m \neq k$.
Let $n=n_1+ \ldots+ n_\ell$.
Now relabel the vertices of $G$ from 1 to $n$ such that such that each $N_k$ contains the $n_k$ integers from $n_1+ \cdots n_k+1$ to $n_1+ \cdots n_{k+1}$. We claim that there is an isomorphism $f$ from $\R(G)$ to $AFR(n_1, \ldots, n_\ell)$ such that $f(N_k)=k$ and $f(\{i,j\})=(i,j)$ when $i>j$.

In $G$, each $N_k$ can be contracted to $\{i,j\}$ if and only if $N_k \cap \{i,j\} \neq \emptyset$.
Moreover, as each $n_k, n_m \geq 2$ then $N_m \not\sim N_k$ in $\R(G)$ when $k \neq m$.
Note that $f(N_k)=k$.
In $AFR(n_1, \ldots, n_\ell)$, the vertex $k$ is the contraction $N_k \times N_k$ in the the folded rook's graph.
Thus $k$ is adjacent to $(i,j)$ if and only if $\{i,j\} \cap N_k \neq \emptyset$.
Moreover $k \neq m$ are not adjacent in $AFR(n_1, \ldots, n_\ell)$.

In $G$, consider the minimal dominating set $\{i,j\}$ where $i \in N_k$, $j \in N_m$, and $k \neq m$.
Without loss of generality suppose $i>j$.
Then $\{i,j\}$ has the following neighbours in $\R(G)$:

\begin{itemize}
\item all $\{i',j\}$ with $i' \notin N_k$,
\item all $\{i,j'\}$ with $j' \notin N_m$,
\item $N_k$ and $N_m$.
\end{itemize}

\noindent Note that $f(\{i,j\})=(i,j)$.
In $AFR(n_1, \ldots, n_\ell)$, the vertex $(i,j)$ is adjacent to $k$ and $m$. 
Additionally, the only other neighbours of $(i,j)$ are $(i',j')$ where $\{i,j\} \cap \{i',j'\} \neq \emptyset$ and $(i',j') \notin N_k \times N_m$.
Thus  $(i,j)$ has the following neighbours in $AFR(n_1, \ldots, n_\ell)$:

\begin{itemize}
\item all $(i',j)$ and $(j,i')$ with $i' \notin N_k$,
\item all $(i,j')$ and $(j',i)$ with $j' \notin N_m$,
\item $k$ and $m$.
\end{itemize}

\noindent Therefore $f$ is an isomorphism from $\R(G)$ to $AFR(n_1, \ldots, n_\ell)$.

\end{proof}

We will now consider the reconfiguration of the $n$ by $n$ rooks graph $K_n \square K_n$. First we impose vertex labelling of the $n$-fold cartesian product $K_n \square \cdots \square K_n$ where every vertex is labelled by the $n$-tuple $(v_1, \ldots, v_n)$ where each $1 \leq v_i \leq n$. We call  $(v_1, \ldots, v_n)$ a \emph{permutation} vertex if its $n$-tuple is a permutation of $(1,2, \ldots, n)$. That is,  $(v_1, \ldots, v_n)$ a \emph{permutation} vertex if $\{v_1, \ldots, v_n\}=\{1,2,\ldots, n\}$.

\begin{theorem}
\label{thm:RooksGraph}
For any positive integer $n$, $\R(K_n \square K_n)$ is the graph formed from two copies of the $n$-fold Cartesian product $K_n \square \cdots \square K_n$ such that every permutation vertex is contracted to its respective copy.
\end{theorem}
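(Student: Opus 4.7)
The plan is to characterize the minimal dominating sets of $K_n\square K_n$, set up a candidate bijection to the vertices of the claimed graph, and then match up the edges. First I would observe that a set $S\subseteq V(K_n\square K_n)$ is dominating if and only if $S$ hits every row or hits every column, since an undominated vertex is exactly one whose row and column are both disjoint from $S$. A short minimality check then shows that $M\in\M(K_n\square K_n)$ if and only if either $M=\{(i,f(i)):1\le i\le n\}$ for some $f:\{1,\ldots,n\}\to\{1,\ldots,n\}$ (a ``row-type'' MDS) or $M=\{(g(j),j):1\le j\le n\}$ for some $g:\{1,\ldots,n\}\to\{1,\ldots,n\}$ (``column-type''); these two families coincide precisely on the sets coming from permutations, and every $M$ has $|M|=n$.

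Next I would send each row-type MDS with function $f$ to the vertex $(f(1),\ldots,f(n))$ of the first copy of $K_n\square\cdots\square K_n$, and each column-type MDS with function $g$ to $(g(1),\ldots,g(n))$ of the second copy. For a permutation $\sigma$ the row-type form $\sigma$ and the column-type form $\sigma^{-1}$ give the same MDS, so the two permutation vertices representing that MDS are identified across the copies, which realises the contraction in the statement. Because every MDS has size $n$, the expansion/contraction rule forces $|M_1-M_2|=|M_2-M_1|=1$ on any edge of $\R(K_n\square K_n)$, so for two row-type sets from $f_1,f_2$ an edge holds iff the tuples differ in exactly one coordinate; the two differing points then share a common row and expand into each other. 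This recovers exactly the edges of the first copy of $K_n\square\cdots\square K_n$, and the column-type case is symmetric.

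The main obstacle I expect is ruling out ``cross'' edges between a non-permutation row-type $M_1$ (from some $f$) and a non-permutation column-type $M_2$ (from some $g$). If such an edge existed, then $|M_1\cap M_2|=n-1$ would force $g\circ f$ to fix $n-1$ indices, so $f$ and $g$ would restrict to mutually inverse bijections between size-$(n-1)$ subsets of $\{1,\ldots,n\}$. The two odd-out points would be $u=(i_0,f(i_0))$ and $v=(g(j_0),j_0)$, where $i_0$ is the bad row index and $j_0$ is the column missed by $f$ on the other $n-1$ rows. Adjacency in $K_n\square K_n$ would demand $i_0=g(j_0)$ or $f(i_0)=j_0$; either option extends the partial bijection to all of $\{1,\ldots,n\}$, forcing $f$ or $g$ to be a genuine permutation and contradicting the hypothesis. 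Hence the only edges of $\R(K_n\square K_n)$ are those already captured within the two copies, and the bijection above is an isomorphism onto the claimed graph.
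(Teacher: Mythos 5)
Your proposal is correct and takes essentially the same route as the paper: the same characterization of minimal dominating sets as row-type or column-type transversals of size $n$, the same identification of each family with the $n$-fold product $K_n \square \cdots \square K_n$ overlapping exactly on the permutation sets, and the same use of the equal-size constraint to reduce every edge to a token slide. Your exclusion of cross edges via partial bijections reaches the same contradiction as the paper's more direct observation that the slid set would still contain exactly one vertex per row.
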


\begin{proof}
Let $v_{ij}$ denote the vertex in the $i^{th}$ row and $j^{th}$ column of $K_n \square K_n$.
Any dominating set of $K_n \square K_n$ must either have a vertex in every row or every column.
This is because if a set of vertices omitted some row $i$ and some column $j$, then $v_{ij}$ would not be dominated.
Moreover, every set which has a vertex in every row or every column is a dominating set.
Thus the minimal dominating sets are the sets which contain exactly one vertex in every row or exactly one vertex in every column.
Without loss of generality, consider the minimal dominating sets which have exactly one vertex in every row of $K_n \square K_n$.
Let $\M_r$ denote the set of all such minimal dominating sets.
Label each minimal dominating set $(c_1, \dots, c_n) \in \M_r$ where $c_i$ is column index of the vertex in row $i$.
Note that in the reconfiguration graph $\R(K_n \square K_n)$,  $(c_1, \dots, c_n) \sim (c_1', \dots, c_n')$ if and only if $(c_1, \dots, c_n)$ and $(c_1', \dots, c_n')$ differ by exactly one index.
Therefore $\M_r$ induces the $n$-fold Cartesian product $K_n \square \cdots \square K_n$ in $\R(K_n \square K_n)$.
Let $\M_c$ denote the the minimal dominating sets which have exactly one vertex in every column of $K_n \square K_n$.
By the generality of our argument, $\M_c$ also induces the $n$-fold Cartesian product $K_n \square \cdots \square K_n$ in $\R(K_n \square K_n)$.
Note that $\M_r \cap \M_c$ are the sets which have exactly one vertex in every row and exactly one vertex in every column.
Thus $\M_r \cap \M_c$ is collection of all the permutations of $(1,2, \ldots, n)$.

It now suffices to show that if $M_r \in \M_r-\M_c$ and $M_c \in \M_c-\M_r$ then $M_c \not\sim M_r$ in $\R(K_n \square K_n)$.
To show a contradiction suppose that $M_c \sim M_r$ in $\R(K_n \square K_n)$.
Note that $|M_c|=|M_r|$ thus $|M_c-M_r|=1$ and $|M_r-M_c|=1$.
As $M_r \in \M_r-\M_c$ then $M_r$ has two vertices which share the same column of $K_n \square K_n$.
As $M_c$ has exactly one vertex in each column of $K_n \square K_n$ then $M_r-M_c=\{x\}$ where $x$ is one of these two vertices.
Moreover, $M_c-M_r=\{y\}$ where $y$ is in a different column than $x$ and thus in the same row as $x$.
Note $M_c=M_r\cup\{y\}-\{x\}$.
However, $y$ is in the same row as $x$ and $M_r \in \M_r$ and hence had exactly one vertex in every row.
Thus $M_c$ also has exactly one vertex in every row.
Therefore $M_c \in \M_r$ which contradicts $M_c \in \M_c-\M_r$.
\end{proof}

A graph $G$ is a \emph{threshold graph} if $G \equiv K_1$ or it is formed by adding an isolated or universal vertex to a threshold graph. Alternatively, a threshold graph is formed with a $K_1$ and then sequentially adding an isolated or universal vertices. The vertices in a threshold graph can be ordered $v_1, \ldots, v_n$ in such a way that each $v_i$ is adjacent to either none or all vertices $v_j$ with $1\leq j<i$. If $v_i$ is adjacent to all vertices which come before it, then it was added as an universal vertex. Moreover if $v_i$ is adjacent to no vertices which come before it, then it was added as an isolated vertex. The complete graph $K_n$ and empty graph $\overline{K_n}$ are both examples of threshold graphs. The complete graph $K_n$ had $n-1$ vertices added as universal vertices whereas $\overline{K_n}$ had no vertices added as universal vertices. 

In the next theorem we show that $\R(G)$ for threshold graphs are complete graphs. In fact we show that only threshold graphs have their reconfiguration graph as a complete graph. Before we do so we need the following well-known observation.

\begin{observation}
\label{obs:complement}
Let $G$ be a graph with no isolated vertices. If $M$ is a minimal dominating set of $G$ then $V-M$ is a dominating set.
\end{observation}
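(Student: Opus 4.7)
The plan is to show that every vertex of $M$ has a neighbor in $V - M$; together with the trivial fact that every vertex of $V-M$ dominates itself, this yields that $V-M$ is a dominating set.

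Fix an arbitrary vertex $v \in M$. Since $M$ is a minimal dominating set, $M - v$ is not a dominating set, so there exists a vertex $u$ that is not dominated by $M - v$; that is, $u \notin M - v$ and $u$ has no neighbor in $M - v$. I would split into two cases depending on whether $u = v$ or $u \neq v$.

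If $u \neq v$, then $u \notin M$, so $u \in V - M$. Because $M$ itself dominates $G$ and $u \notin M$, $u$ must have a neighbor in $M$; this neighbor cannot lie in $M - v$, so it must be $v$. Hence $v$ is adjacent to the vertex $u \in V - M$, as desired. If instead $u = v$, then $v$ has no neighbor in $M - v$, so every neighbor of $v$ lies in $V - M$. Here I use the hypothesis that $G$ has no isolated vertices to guarantee that $v$ has at least one neighbor, which must therefore lie in $V - M$.

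In either case $v$ has a neighbor in $V - M$, and since $v \in M$ was arbitrary, every vertex of $M$ is dominated by $V - M$. The only place any real content enters is the second case, which is where the no-isolated-vertices assumption is used; this is the only mild obstacle, and it is handled directly by the hypothesis.
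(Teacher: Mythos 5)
Your proof is correct: the case analysis on whether the undominated vertex $u$ equals $v$ is exactly the classical argument (due to Ore) for this fact, and you correctly isolate the case $u=v$ as the only place the no-isolated-vertices hypothesis is needed. The paper states this observation as well known and gives no proof of its own, so there is nothing to compare against; your argument is the standard one and is complete.
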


\begin{theorem}
\label{thm:threshold}
Let $G$ be a graph and $r$ a positive integer. $\R(G) = K_r$ if and only if $G$ is a threshold graph where $r-1$ vertices were added as universal vertices.
\end{theorem}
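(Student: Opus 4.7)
The plan is to induct on $|V(G)|$, peeling off a single vertex at each step using Theorem~\ref{thm:joinK1} and the corollary to Theorem~\ref{thm:disjointunion}. The base case $G = K_1$ is immediate, since $K_1$ is a threshold graph with zero universal additions and $\R(K_1) = K_1$. For the forward direction I take $G$ to be a threshold graph with $r-1$ universal additions and examine the last vertex $v$ of the construction. If $v$ was added as isolated, then $v$ is isolated in $G$ and $G = G' \cup \overline{K_1}$ with $G'$ threshold on $r-1$ universal additions; the corollary to Theorem~\ref{thm:disjointunion} and the inductive hypothesis give $\R(G) = \R(G') = K_r$. If $v$ was added as universal, then $v$ is universal in $G$ and $G = G' \vee K_1$ with $G'$ threshold on $r-2$ universal additions, and Theorem~\ref{thm:joinK1} with induction yields $\R(G) = \R(G') \vee K_1 = K_{r-1} \vee K_1 = K_r$.

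For the converse I assume $\R(G) = K_r$, noting that for $|V(G)| \geq 2$ the graph cannot contain both an isolated and a universal vertex. If $G$ has an isolated vertex $v$, the corollary to Theorem~\ref{thm:disjointunion} gives $\R(G-v) = K_r$, and the inductive hypothesis combined with re-adjoining $v$ as an isolated vertex at the end of the construction produces the desired threshold structure with $r-1$ universal additions. If $G$ has a universal vertex $v$, Theorem~\ref{thm:joinK1} forces $\R(G-v) = K_{r-1}$, and induction combined with re-adjoining $v$ as universal again yields the claimed structure. The only remaining case is that $G$ has neither an isolated nor a universal vertex, and I must show this leads to $\R(G)$ being non-complete, contradicting $\R(G) = K_r$.

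This last case is the crux of the argument and the step I expect to be the main obstacle. Since $G$ has no universal vertex, every minimal dominating set has size at least $2$. I fix any $M_1 \in \M(G)$; since $G$ has no isolated vertex, Observation~\ref{obs:complement} tells us $V \setminus M_1$ is a dominating set, so it contains a minimal dominating set $M_2$, which likewise has size at least $2$. Then $M_1 \cap M_2 = \emptyset$, so $|M_1 \setminus M_2|, |M_2 \setminus M_1| \geq 2$, and Definition~\ref{def:ReconfigRule}---which demands that at least one side of the symmetric difference be a singleton---prevents $M_1 \sim M_2$ in $\R(G)$. Hence $\R(G)$ has non-adjacent vertices, contradicting completeness. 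The insight I expect to carry the proof is precisely recognizing that Observation~\ref{obs:complement} is exactly the right tool to construct two disjoint non-singleton minimal dominating sets; once these are in hand, the adjacency rule immediately forbids them from being adjacent.
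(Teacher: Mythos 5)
Your proof is correct and follows essentially the same route as the paper: both directions peel off universal vertices via Theorem~\ref{thm:joinK1} (discarding isolated vertices via the corollary to Theorem~\ref{thm:disjointunion}), and your crux step --- producing two disjoint minimal dominating sets of size at least two via Observation~\ref{obs:complement} and noting the adjacency rule forbids them from being adjacent --- is exactly the paper's argument. The only cosmetic difference is that you induct on $|V(G)|$ where the paper inducts on $r$.
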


\begin{proof}
We will first prove the converse statement through induction on $r$.
Suppose $r=1$, then $G$ is the empty graph and hence $\R(G) = K_1$.
Now suppose the statement holds for some $r \geq 1$.
Let $G$ be a threshold graph where $r$ vertices were added as universal vertices.
Note that if $G$ has any isolated vertices every minimal dominating set must contain them, so $\R(G)$ is not affected by their removal.
So, without loss of generality assume that $G$ has no isolated vertices.
Then $G \cong (H \vee K_1)$ where $H$ is a threshold graph with $r-1$ vertices were added as universal vertices.
By our induction hypothesis it follows that $\R(H)=K_r$.
Therefore by Theorem \ref{thm:joinK1}, $\R(G)=K_1 \vee K_r = K_{r+1}$.

Now suppose for some graph $G$ that $\R(G) = K_r$.
Without loss of generality assume that $G$ has no isolated vertices.
Again we proceed by induction on $r$.
For $r=1$ suppose that $\R(G) = K_1$.
Then $G$ has exactly one minimal dominating set.
Note that $G$ has no edges. 
Otherwise, as $G$ has no isolated vertices it would follow from Observation \ref{obs:complement} that $G$ has at least two minimal dominating sets. 
Therefore $G$ has no edges and hence must be an empty graph.
Indeed, $G$ is a threshold graph where $0$ vertices were added as universal vertices.

Now suppose for some $r \geq 1$ that $\R(G) = K_r$ implied that $G$ was a threshold graph where $r-1$ vertices were added as universal vertices.
We will first show that $G$ must have at least one universal vertex.
To show a contradiction, suppose $G$ has no universal vertices.
Then every minimal dominating set of $G$ has at least two vertices.
Let $M$ be a minimal dominating set $M$
Note $V-M$ is a dominating a dominating set, so let $M^{-1}$ be any minimal dominating set within $V-M$.
Note $M^{-1}$ and $M$ are disjoint.
Therefore $|M^{-1}-M|=|M^{-1}| \geq 2$ and $|M-M^{-1}|=|M| \geq 2$.
Therefore no one vertex can be contracted to expanded to reconfigure $M$ to $M^{-1}$.
Therefore $M \not\sim M^{-1}$ in $\R(G)$ which contradicts $\R(G) = K_r$.
Therefore $G$ has a universal vertex.

Let $G \cong H \vee K_1$.
It follows from Theorem \ref{thm:joinK1} that $\R(G) \cong \R(H) \vee K_1$
Moreover $\R(G) = K_r$ so it must be that $\R(H)=K_{r-1}$.
By our inductive hypothesis this implies that $H$ is a threshold graph where $r-1$ vertices were added as universal vertices.
Therefore $G$ is a threshold graph where $r$ vertices were added as universal vertices.
\end{proof}

We now give a useful lemma regarding independent sets. For a graph $G$, an \emph{independent set} is a subset of vertices such that no two vertices in $S$ are adjacent in $G$.

\begin{lemma}
\label{lem:subgraph}
Let $G$ be a graph $G$ with independent set $S$. Let $\mathcal{M}_S$ be the collection of minimal dominating sets in $G$ which every vertex in $S$ but no vertex in $N(S)$. Then the minimal dominating sets of $\mathcal{M}_S$ induce $\R(G-N[S])$ as a subgraph of $\R(G)$.
\end{lemma}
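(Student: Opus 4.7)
The plan is to exhibit an explicit bijection $\varphi\colon \mathcal{M}_S \to \M(G-N[S])$ defined by $\varphi(M) = M \setminus S$, with inverse $D \mapsto D \cup S$, and show that $\varphi$ is in fact a graph isomorphism between the subgraph of $\R(G)$ induced by $\mathcal{M}_S$ and $\R(G-N[S])$. Once this is established, the lemma follows immediately.

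First, I would check that $\varphi$ is a well-defined bijection. For $M \in \mathcal{M}_S$, the hypotheses $S \subseteq M$ and $M \cap N(S) = \emptyset$ force $M \setminus S \subseteq V(G) - N[S]$. To see that $M \setminus S$ dominates $G - N[S]$, I would observe that any vertex $u \in V(G) - N[S]$ is dominated in $G$ by some $m \in M$; the vertex $m$ cannot lie in $S$ (otherwise $u \in N(S)$), and cannot lie in $N(S)$ (since $M \cap N(S) = \emptyset$), so $m \in (M \setminus S) \cap V(G - N[S])$, and the incidence survives in the induced subgraph. For minimality, removing any $v \in M \setminus S$ from $M \setminus S$ in $G-N[S]$ would still leave $S$ to dominate $N[S]$ in $G$ and $(M \setminus S) - \{v\}$ to dominate $V - N[S]$, so $M - \{v\}$ would dominate $G$, contradicting minimality of $M$. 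The inverse map $D \mapsto D \cup S$ is checked symmetrically: $S$ and $D$ together dominate $G$; no vertex $s \in S$ can be removed because all its neighbours lie in $N(S)$, which is disjoint from both $S$ (by independence) and $D$ (since $D \subseteq V - N[S]$); and no vertex of $D$ can be removed by the minimality of $D$ in $G - N[S]$.

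Next, I would verify that $\varphi$ preserves the reconfiguration adjacency. Because $S \subseteq M_1 \cap M_2$ for any $M_1, M_2 \in \mathcal{M}_S$, the set-differences collapse to $M_2 \setminus M_1 = D_2 \setminus D_1$ and $M_1 \setminus M_2 = D_1 \setminus D_2$ where $D_i = \varphi(M_i)$. If $M_1 \sim M_2$ in $\R(G)$ via an expansion with $M_2 \setminus M_1 = \{v\}$ and $M_1 \setminus M_2 \subseteq N_G(v)$, then $v \in M_2$ together with $M_2 \in \mathcal{M}_S$ forces $v \notin S \cup N(S)$, so $v \in V(G-N[S])$; the same reasoning places $M_1 \setminus M_2$ in $V(G-N[S])$. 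Since $G-N[S]$ is an induced subgraph, $N_G(v) \cap V(G-N[S]) = N_{G-N[S]}(v)$, so the same vertex $v$ witnesses $D_1 \sim D_2$ in $\R(G-N[S])$. The converse lifts an expansion in $\R(G-N[S])$ back to $\R(G)$ using the same $v$, since $S$ is untouched, and contractions are handled symmetrically.

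The main obstacle is the bookkeeping in the minimality arguments: I must cleanly separate the role of $S$ (which dominates $N[S]$) from the role of $M \setminus S$ (which dominates the rest) and confirm that no single-vertex removal leaks across this partition in either direction. Once this separation is made explicit, both the bijection and the edge-preservation statements reduce to routine set-theoretic verifications.
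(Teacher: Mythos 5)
Your proposal is correct and follows essentially the same route as the paper: establish the bijection $M \mapsto M\setminus S$ (equivalently $D \mapsto D\cup S$) between $\mathcal{M}_S$ and the minimal dominating sets of $G-N[S]$, then observe that because $S$ is contained in every member of $\mathcal{M}_S$ the symmetric differences coincide, so adjacency is preserved in both directions. Your explicit remark that $N_G(v)\cap V(G-N[S]) = N_{G-N[S]}(v)$ for the induced subgraph is a small point the paper leaves implicit, but otherwise the arguments are the same.
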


\begin{proof}
We first show that $M_S \in \mathcal{M}_S$ if and only if $M_S=M \cup S$ for some minimal dominating set $M$ of in $G-N[S]$.
Suppose $M_S \in \mathcal{M}_S$ and let $M=M_S-S$.
Note that $M \subseteq V(G-N[S])$.
Moreover, no vertex in $S$ is adjacent to any vertex in $G-N[S]$.
Therefore $M$ must be a dominating set of $G-N[S]$.
To show a contradiction, suppose that $M$ is not a minimal domination set of $G-N[S]$. 
Then there exists a proper subset $M' \subset M$ which dominates $G-N[S]$.
Note that the vertices in $S$ and $M'$ dominate $N[S]$ and $G-N[S]$ respectively.
Therefore $M' \cup S$ is a dominating set in $G$ and is a proper subset of $M_S$.
However, this contradicts the fact that $M_S$ is a minimal dominating set in $G$.
Therefore $M_S=M \cup S$ for some minimal dominating set $M$ of in $G-N[S]$.

Let $M$ be a minimal dominating set in $G-N[S]$.
We will show that $M \cup S$ is a minimal dominating set in $G$.
The vertices in $S$ and $M$ dominate $N[S]$ and $G-N[S]$ respectively.
Therefore $M \cup S$ is a dominating set in $G$.
Note that no vertex in $M$ is adjacent to any vertex in $S$.
Now for any $v \in M$, as $M$ is minimal, then there exists a vertex in $G-N[S]$ which is not dominated by $M-\{v\}$.
Thus $M \cup S-\{v\}$ is not a dominating set for any $v \in M$.
For any $v \in S$, as $S$ is an independent set no vertex in $S-\{v\}$ is adjacent to $v$.
Thus $M \cup S-\{v\}$ is not a dominating set for any $v \in S$.
Therefore $M \cup S$ is a minimal dominating set for any minimal dominating set $M$ in $G-N[S]$.

Now let $M_1 \cup S$ and $M_2 \cup S$ be two minimal dominating sets in $\mathcal{M}_S$.
Note $M_1$ and $M_2$ must be minimal dominating sets of $G-N[S]$.
Note that $M_1 \cup S-M_2 \cup S=M_1-M_2$ and similarly $M_2 \cup S-M_1 \cup S=M_2-M_1$.
Therefore $M_1 \sim M_2$ in $\R(G-N[S])$ if and only if $M_1 \cup S \sim M_2 \cup S$ in $\R(G)$.
Therefore $\mathcal{M}_S$ induces $\R(G-N[S])$ as a subgraph of $\R(G)$.
\end{proof}

We now show that only the empty graph $\overline{K_r}$ has $\R(G)$ also as an empty graph.
Lemma \ref{lem:subgraph} lends itself nicely to proof by induction.
If know that $\R(G)$ is empty, it must be true that $\R(G-N[v])$ is also empty.
Induction will take care of every case except the one where $\R(G-N[v])$ is empty for every vertex $v$.
This leads us to our next Theorem.
We remark that classifying the graphs by the graphs induced by $G-N[v]$ has been a recent area of interest (See \cite{YU2021112519, zhang2022graphs, zhang2024graphstree,zhang2024graphsreg }).

\begin{theorem}
\label{thm:G-N[v]_Empty}
 $G$ is a graph with the property that for all vertices $v$, there exists an $r$ such that $G-N[v] \cong \overline{K_r}$ if and only if $G \cong \overline{K_n}$ or $G$ is a complete multipartite graph.
\end{theorem}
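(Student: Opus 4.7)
The plan is to prove both directions separately, and in fact both are short once one recalls the standard characterization that a graph with no isolated vertices is complete multipartite if and only if it contains no induced $K_1 \cup K_2$ (equivalently, the complement is a disjoint union of cliques, equivalently, ``non-adjacency'' is an equivalence relation).

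For the sufficiency direction ($\Leftarrow$), I would just verify the two cases. If $G \cong \overline{K_n}$, then for any vertex $v$, $N[v]=\{v\}$, so $G-N[v] \cong \overline{K_{n-1}}$. If $G$ is the complete multipartite graph with parts $V_1,\ldots,V_\ell$ and $v \in V_i$, then $v$ is adjacent to every vertex outside $V_i$, so $N[v]=V(G)-(V_i-\{v\})$ and thus $G-N[v]=V_i-\{v\}$, which is an independent set and hence $\overline{K_{n_i-1}}$.

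For the necessity direction ($\Rightarrow$), suppose $G$ has the property that $G-N[v] \cong \overline{K_{r(v)}}$ for each $v$. If $G$ has no edges, then $G \cong \overline{K_n}$ and we are done, so assume $G$ has at least one edge $xy$. First I would rule out isolated vertices: if $u$ were isolated, then $x,y \in V(G)-N[u]=V(G)-\{u\}$ and the edge $xy$ survives in $G-N[u]$, contradicting the assumption. So $G$ has no isolated vertices, and it remains to show $G$ is complete multipartite. Suppose for contradiction that $G$ contains an induced $K_1 \cup K_2$, i.e.\ vertices $u,x,y$ with $xy \in E(G)$ but $u$ not adjacent to either of $x,y$. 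Then $x,y \notin N[u]$, so both lie in $V(G)-N[u]$, and the edge $xy$ is present in $G-N[u]$, again contradicting the assumption. Hence $G$ has no induced $K_1 \cup K_2$, and combined with the absence of isolated vertices this forces $G$ to be complete multipartite.

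The argument is essentially direct; the only mild obstacle is recognizing (or citing) the forbidden-subgraph characterization of complete multipartite graphs. No deeper casework or construction is needed.
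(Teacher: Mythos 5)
Your proof is correct, and it takes a genuinely different (and shorter) route than the paper. The key observation you exploit is that the hypothesis ``$G-N[v]$ is edgeless for every $v$'' is \emph{literally} the statement that $G$ contains no induced $K_2 \cup K_1$ (three vertices $v,x,y$ with $xy$ an edge and $v$ adjacent to neither are exactly an edge surviving in $G-N[v]$), so the theorem reduces to the classical fact that the $(K_2\cup K_1)$-free graphs are precisely the complete multipartite graphs together with the edgeless graphs. The paper instead gives a self-contained constructive argument: it fixes a vertex $a_1$, shows $A_1 = V(G)-N(a_1)$ is an independent set each of whose vertices is completely joined to $N(a_1)$, concludes $G \cong \overline{K_{|A_1|}} \vee H_1$, and iterates on $H_1$ to peel off the parts one at a time. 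Your version buys brevity and transparency at the cost of citing (or having to prove) the forbidden-subgraph characterization; since you already note that it follows from non-adjacency being an equivalence relation whose classes are the parts, adding that one sentence makes the argument fully self-contained, and at that point it is arguably preferable to the paper's iteration, which also has to argue separately about termination and about whether the final $H_k$ contributes a part. One cosmetic remark: in the sufficiency direction you write $G-N[v]=V_i-\{v\}$, conflating a graph with its vertex set; say instead that $G-N[v]$ is the subgraph induced by $V_i-\{v\}$, which is edgeless.
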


\begin{proof}
First suppose $G$ is a complete multipartite graph.
For any vertex $v$ note that $v$ is adjacent to all vertices except those in the same part as $v$.
Therefore $G-N[v] \cong \overline{K_{r}}$ where $r+1$ is number of vertices in the same part as $v$.
Moreover if $G \cong \overline{K_n}$, then $G-N[v] \cong \overline{K_{n-1}}$ for all vertices $v$.

Now suppose $G$ is a graph with the property that for all vertices $v$, there exists an $r$ such that $G-N[v] \cong \overline{K_r}$.
We will now iteratively construct $G$ by looking at the graphs induced by the open neighbourhoods of its vertices.
Fix a vertex $a_1 \in V(G)$ and let $A_1=V(G)-N(a_1)$
As $G-N[a_1] \cong \overline{K_r}$ for some $r$ and $a_1$ is not adjacent to any other vertex in $A_1$ then $A_1$ induces an independent set.
We now claim that each vertex $A_1$ is adjacent to every vertex in $N(a_1)$.
Suppose some $x \in A_1$ is not adjacent to some $y \in N(a_1)$.
Then $V-N[x]$ would induce the edge $a_1y$ which contradicts that $G-N[x]$ is empty.
Therefore $G \cong \overline{K_{|A_1|}} \vee H_1$ where $H_1$ is the graph induced by $N(a_1)$.
Consider two cases for $H$.

\vspace{2mm}
\noindent \underline{Case 1}: Suppose $H_1$ has no edges.
Then $G \cong \overline{K_{|A_1|}} \vee \overline{K_{|H_1|}}$.
If $|H_1|=0$ then $G$ is empty and if $|H_1|>0$ then $G$ is a complete bipartite graph.
\vspace{2mm}

\vspace{2mm}
\noindent \underline{Case 2}: Suppose $H_1$ has at least one edge.
Note that every $v \in V(H_1)$ is adjacent to every vertex in $A_1$.
Therefore $G-N_G[v] \cong H_1-N_{H_1}[v]$ for every $v \in V(H_1)$.
Thus $H$ has the property that for all vertices $v \in V(H_1)$, there exists an $r$ such that $H_1-N_{H_1}[v] \cong \overline{K_r}$.

\vspace{2mm}

We can iterate this process by choosing $a_2 \in V(H_1)$ and let $A_2=V(H_1)-N_{H_1}(a_2)$.
The same argument would show that $H_1 \cong \overline{K_{|A_2|}} \vee H_2$ where $H_2$ is the graph induced by $N_{H_1}(a_2)$.
If $H_2$ has no edges then the process terminates and $G$ is either a complete bipartite graph with parts $A_1$ and $A_2$ or a complete triparite graph with parts $A_1$, $A_2$, and $V(H_2)$.
If $H_2$ has at east one edge then the process continues.
Note that the process will eventually terminate as the order of $H_i$ decreases as the process continues.
Let $k$ be the number of iterations until the process terminates. 
Then $G$ is a complete multipartite graphs with $k$ or $k+1$ parts.
\end{proof}

\begin{theorem}
\label{thm:emptygraph}
Let $G$ be a graph and $r$ a positive integer. $\R(G) = \overline{K_r}$ if and only if $G$ is an empty graph and $r=1$.
\end{theorem}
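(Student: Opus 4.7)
The reverse direction is immediate from Theorem \ref{thm:families1}$(i)$: if $G \cong \overline{K_n}$ then $\R(G) = K_1 = \overline{K_1}$, so $r=1$. For the forward direction, my plan is to run an induction on $|V(G)|$ that combines Lemma \ref{lem:subgraph} (to pass from $G$ to each $G-N[v]$) with the structural Theorem \ref{thm:G-N[v]_Empty} just established; the genuine substance already sits in those two results, so the argument here should be mostly assembly, with the only mild subtlety being the degenerate case $V(G-N[v]) = \emptyset$.

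For the inductive step I would assume $\R(G) = \overline{K_r}$ and pick any vertex $v$. The singleton $S=\{v\}$ is independent, so Lemma \ref{lem:subgraph} embeds $\R(G-N[v])$ as an induced subgraph of $\R(G) = \overline{K_r}$; hence $\R(G-N[v])$ is edgeless, i.e.\ $\R(G-N[v]) \cong \overline{K_s}$ for some positive integer $s$. Since $v \in N[v]$ we have $|V(G-N[v])| < |V(G)|$, so the inductive hypothesis forces $G-N[v] \cong \overline{K_{n_v}}$ for some $n_v \geq 0$ (and the case $V(G-N[v]) = \emptyset$ is vacuously fine, covering the base situation). Because this holds for every $v$, Theorem \ref{thm:G-N[v]_Empty} will pin $G$ down as either $\overline{K_n}$ (in which case Theorem \ref{thm:families1}$(i)$ gives $r=1$ and closes the argument) or a complete multipartite graph with at least two parts.

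The remaining task is therefore to rule out the multipartite case, which I would do by exhibiting an edge in $\R(G)$ in each subcase. If some part has exactly one vertex, then $G$ has a universal vertex and $G \cong G' \vee K_1$ for a graph $G'$ that carries at least one minimal dominating set, so Theorem \ref{thm:joinK1} gives $\R(G) = \R(G') \vee K_1$, which has an edge from the $K_1$-vertex to any vertex of $\R(G')$, contradicting $\R(G) = \overline{K_r}$. Otherwise every part has size at least two, and Theorem \ref{thm:MultiParitite} identifies $\R(G)$ with $AFR(n_1,\ldots,n_\ell)$; for $\ell \geq 2$ this graph always has an edge (for instance, the contracted vertex $k$ is adjacent to any pair-vertex $(i,j)$ with $\{i,j\} \cap N_k \neq \emptyset$), again contradicting $\R(G) = \overline{K_r}$. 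The only potential stumbling block I anticipate is bookkeeping around the vacuous/degenerate case $V(G-N[v]) = \emptyset$ inside the induction, but this is a cosmetic issue rather than a genuine obstacle.
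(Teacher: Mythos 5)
Your proposal is correct and follows essentially the same route as the paper: an induction that uses Lemma \ref{lem:subgraph} to pass to $G-N[v]$, invokes Theorem \ref{thm:G-N[v]_Empty} when every $G-N[v]$ is edgeless, and then rules out the complete multipartite case via Theorems \ref{thm:joinK1} and \ref{thm:MultiParitite}. The only difference is cosmetic: you phrase the induction on ``$\R(G)$ edgeless implies $G$ edgeless,'' while the paper proves the contrapositive ``$G$ non-empty implies $\R(G)$ has an edge.''
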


\begin{proof}
Suppose $G$ is an empty graph $\overline{K_n}$ and $r=1$.
Then as $\overline{K_n}$ only has one minimal dominating set then $\R(G) = \overline{K_r}$.

We will now show that every non-empty graph $G$ also has a non-empty $\R(G)$.
We will induct on the number of vertices $n$ in $G$.
The smallest non-empty graph is on two vertices. 
Additionally, there is exactly one non-empty graph is on two vertices $K_2$, and $\R(K_2)=K_2$.
Now suppose every non-empty graph up to order $n-1$ has a non-empty reconfiguration graph.
Let $G$ be a non-empty graph of order $n$.
Let $v$ be a vertex in $G$.
By our inductive hypothesis if $G-N[v]$ is non-empty then $\R(G-N[v])$ is non-empty.
Moreover, from Lemma \ref{lem:subgraph} it follows that $\R(G-N[v])$ is a induced subgraph of $\R(G)$ and hence $\R(G)$ is non-empty.
So if any vertex $v \in V(G)$ has $G-N[v]$ is non-empty then $\R(G)$ is non-empty.
So suppose for every vertex $v \in V(G)$ that $G-N[v]$ is empty.
As $G$ is non-empty then it follows from Theorem \ref{thm:G-N[v]_Empty} that $G$ is a complete multiplartite graph.
If any part of $G$ is size one, then $G$ has a universal vertex $x$.
It follows from Theorem \ref{thm:joinK1} that $\R(G) = \R(G-x) \vee K_1$ and hence $\R(G)$ has an edge.
If every part of $G$ is size at least 2 then it follows form Theorem \ref{thm:MultiParitite} that $\R(G)$ is an altered folded rook graph and hence has an edge.
Therefore $\R(G)$ is non-empty.
\end{proof}

\section{When is $\R(G)$ connected}
\label{sec:conneect}

In this section we will investigate when the reconfiguration graph is connected. However, as we will show for a general graph $G$, its reconfiguration graph $\R(G)$ is necessarily not connected. To give some intuition for when a reconfiguration is possible, we will state some useful definitions from \cite{2021Beaton} which categorize vertices according to a dominating set $S$. For a graph $G$, let $\mathcal{D}(G)$ denotes the collection of dominating sets of $G$. For a dominating set $S$ of $G$ let

$$a(S) = \{v \in S : S-v \notin \mathcal{D}(G)\},$$

\noindent denote the set of \emph{critical} vertices of $S$ with respect to domination (in that their removal makes the set no longer dominating). Note that a dominating set $S$ is minimal if and only if $S=a(S)$. To contrast critical vertices, we say a vertex in $S$ is \emph{supported} if it is not critical. That is, $v$ is a supported vertex of $S$ if $v \in S-a(S)$.

We say a supported vertex $v \in S - a(S)$ is \emph{supported by} $u \in S$ if $N[u] \cap N[v] \neq \emptyset$. This brings us to our first observation regarding critical vertices.

\begin{observation}
\label{obs:IsoImpliesCrit}
 Let $G$ be a graph with dominating set $S$ containing $v$. If $v$ has no neighbours in $S$ then $v \in a(S)$.
\end{observation}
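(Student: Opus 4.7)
The plan is to unwind the definitions and give a direct one-step argument. The claim is that if $v \in S$ has no neighbours inside $S$, then removing $v$ from $S$ breaks the domination property, which is precisely the condition $v \in a(S)$.

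First I would examine $N[v] \cap S$, the set of vertices in $S$ that dominate $v$. Since $v \in S$, we have $v \in N[v] \cap S$. By hypothesis $v$ has no neighbours in $S$, so no other vertex of $S$ lies in $N(v)$, giving $N[v] \cap S = \{v\}$. Therefore $v$ is its own unique dominator in $S$.

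Now consider $S - v$. The vertex $v$ is not in $S - v$ (we removed it), and $v$ has no neighbour in $S - v$ either (it had none in $S$ to begin with). Consequently $v$ is not dominated by $S - v$, so $S - v \notin \mathcal{D}(G)$. By the definition $a(S) = \{u \in S : S - u \notin \mathcal{D}(G)\}$, this means $v \in a(S)$, completing the argument. There is no real obstacle here; the observation is essentially a restatement of the definitions, and the only thing to be careful about is noting explicitly that \emph{closed} neighbourhoods are needed so that $v$ counts as dominating itself.
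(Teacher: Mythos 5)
Your argument is correct and is exactly the definitional unwinding that the paper leaves implicit (the statement appears there as an unproved observation): since $v$ has no neighbours in $S$, the vertex $v$ is undominated by $S-v$, so $S-v\notin\mathcal{D}(G)$ and hence $v\in a(S)$. Nothing further is needed.
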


\noindent We will partition the vertices not in $S$ into the following two sets:

\vspace{-6mm}

\begin{align*}
N_1(S) &= \{v \in V-S : |N[v] \cap S| = 1 \}\\
N_2(S) &= \{v \in V-S : |N[v] \cap S| \geq 2 \}.
\end{align*}

\noindent Furthermore, recall the partition $a(S) = a_1(S) \cup a_2(S)$, where 

\vspace{-6mm}

\begin{align*}
a_1(S) &= \{v \in a(S) : N[v] \cap N_1(S) \neq \emptyset \}\\
a_2(S) &= \{v \in a(S) : N[v] \cap N_1(S) = \emptyset \}.
\end{align*}

For a dominating set $S$, if $v$ is critical then some vertex is not dominated by $S-\{v\}$. We will call the vertices not dominated by $S-\{v\}$ the \emph{private neighbours} of $v$ (with respect to $S$). Whether $v \in a_1(S)$ or $v \in a_2(S)$ will help us locate its private neighbours. If $v \in a_1(S)$ then it has a private neighbour in $N_1(S)$. If $v \in a_2(S)$ then $v$ is the only private neighbour of $v$.

\begin{observation}
\label{obs:a2ImpliesIso}
 Let $G$ be a graph. For any dominating set $S$ if $v \in a_2(S)$ then $N(v) \subseteq N_2(S)$ and hence has no neighbours in $S$.
\end{observation}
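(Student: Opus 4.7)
The plan is to unpack what $v \in a_2(S)$ gives us: namely $v \in a(S)$ together with $N[v] \cap N_1(S) = \emptyset$. The strategy is to first show that the only private neighbour of $v$ (with respect to $S$) is $v$ itself, which forces $v$ to have no neighbour in $S$, and then to use the $a_2$ condition once more to push every other neighbour of $v$ into $N_2(S)$.

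First I would use criticality. Because $v \in a(S)$, the set $S - v$ is no longer dominating, so there exists some vertex $u$ with $N[u] \cap (S-v) = \emptyset$ while $N[u] \cap S \neq \emptyset$ (since $S$ itself is dominating). This forces $N[u] \cap S = \{v\}$. I would then split on whether $u \in S$ or not. If $u \in V - S$, then $|N[u] \cap S| = 1$ says $u \in N_1(S)$, and $u \in N(v) \subseteq N[v]$ contradicts $N[v] \cap N_1(S) = \emptyset$. Therefore $u \in S$, and then $u \in N[u] \cap S = \{v\}$ gives $u = v$. In particular $N[v] \cap S = \{v\}$, so $v$ has no neighbour in $S$.

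Now for the main containment $N(v) \subseteq N_2(S)$: pick any $w \in N(v)$. By the previous paragraph $w \notin S$, so $w \in V - S$, and since $S$ dominates $G$ we have $|N[w] \cap S| \geq 1$. On the other hand $w \in N[v]$, so the assumption $N[v] \cap N_1(S) = \emptyset$ rules out $w \in N_1(S)$, i.e.\ $|N[w] \cap S| \neq 1$. Combining these two gives $|N[w] \cap S| \geq 2$, so $w \in N_2(S)$, as desired.

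There is no real obstacle here; the content is purely a careful chase of the definitions of $a(S)$, $a_2(S)$, $N_1(S)$, and $N_2(S)$. The only subtlety worth flagging in the write-up is the case analysis on whether the undominated witness $u$ lies in $S$ or outside $S$, since it is this step that secretly does the work of turning ``$v$ is critical'' plus ``$v$ has no private neighbour in $N_1(S)$'' into ``$v$ is its own private neighbour,'' which is the geometric content of the observation.
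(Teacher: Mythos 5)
Your proof is correct and follows essentially the same route as the paper's: both use criticality of $v$ to produce an undominated witness in $N[v]$, then use $N[v]\cap N_1(S)=\emptyset$ to rule out any witness other than $v$ itself, concluding that $v$ has no neighbours in $S$ and that every neighbour lies in $N_2(S)$. The only difference is organizational (you isolate the witness $u$ and case on $u\in S$ versus $u\notin S$, whereas the paper first places all of $N(v)-S$ into $N_2(S)$ and then deduces the witness must be $v$), which does not change the substance.
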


\begin{proof}
Let $v \in a_2(S)$.
As $v$ is critical then $S-v$ is not a dominating set and hence has a private neighbour.
As $S$ was a dominating set then any private neighbour of $v$ is in $N[v]$.
More specifically, either $v$ or some vertex in $N(v)-S$ is not dominated by $S-v$.
By the definition of $a_2(S)$, $N[v] \cap N_1(S) = \emptyset$.
Therefore every vertex in $N(v)-S$ must be in $N_2(S)$, and thus is still dominated by its other neighbour in $S$.
Thus, $v$ is not dominated by $S-v$ and hence it has no neighbours in $S-v$.
Therefore  $N(v) \subseteq N_2(S)$.
\end{proof}

We will now show that the reconfiguration graph of a forest is connected. We will often refer to a vertex of degree $1$ as a \emph{leaf}. Additionally, if a vertex has a leaf neighbour, we refer to it as a \emph{stem}. The following lemma will be useful.

\begin{lemma}
\label{lem:treeRefig}
Let $T$ be a tree with minimal dominating set $M$. Let $s$ be a stem in $T$ and let $L$ be the set of leaf neighbours of $s$. If $M$ contains $s$ with no non-leaf neighbours in $N_1(M)$ then $M'=M \cup L -\{s\}$ is a minimal dominating set and adjacent to $M$ in $\R(T)$.
\end{lemma}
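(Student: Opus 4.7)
The plan is to verify three things: (a) that $M' = (M \setminus \{s\}) \cup L$ is a dominating set of $T$, (b) that it is minimal, and (c) that $M \sim M'$ in $\R(T)$. Part (c) is essentially free from Definition \ref{def:ReconfigRule}: since $s \notin L$, we have $M - M' = \{s\}$ and $M' - M = L \setminus M \subseteq N(s)$, which is exactly the Contract condition with witness vertex $s$. So the real content is (a) and (b).

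For (a), I would argue by cases on where an arbitrary $u \in V(T)$ lies. If $u \in L \cup \{s\}$, then either $u \in L \subseteq M'$ or $u = s$ is dominated by any element of $L$ (which is nonempty because $s$ is a stem). If $u \notin N[s]$, then $u$ was dominated in $M$ by some $w \neq s$, and $w$ still lies in $M' = (M \setminus \{s\}) \cup L$. The interesting case is $u \in N(s) \setminus L$, a non-leaf neighbour of $s$: if $u \in M$ then $u \in M'$; if $u \notin M$, then the hypothesis that $s$ has no non-leaf neighbours in $N_1(M)$ forces $u \in N_2(M)$, so $u$ has a second dominator in $M \setminus \{s\} \subseteq M'$.

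For (b), I would show each $v \in M'$ is critical. If $v \in L$, then $v$ is a leaf whose only neighbour is $s \notin M'$, so removing $v$ leaves $v$ undominated. If $v \in M' \setminus L = M \setminus \{s\}$, minimality of $M$ supplies a private neighbour $u \in N[v]$ (possibly $u = v$) with $N[u] \cap M = \{v\}$. The key point is that this $u$ remains privately dominated by $v$ in $M'$: the only difference is that $s$ was removed and vertices of $L$ were added, but $v \neq s$ means $s$ was not the dominator of $u$, and no vertex of $L$ lies in $N[u]$ — indeed, if some $\ell \in L$ were in $N[u]$, then since $N[\ell] = \{s, \ell\}$ we would have $u \in \{s, \ell\} \subseteq N[s]$, forcing $N[u] \cap M$ to contain $s$, which contradicts $N[u] \cap M = \{v\}$ with $v \neq s$. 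Therefore $N[u] \cap M' = \{v\}$, and $v$ is critical in $M'$.

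The main obstacle is confirming that adding all of $L$ does not accidentally cover any private neighbour of a vertex $v \in M \setminus \{s\}$; this is handled by the structural observation that leaves in $L$ only reach $s$ and themselves, so they can interfere with private domination only inside $N[s]$, and any such interference would force the witness dominator to be $s$, contradicting $v \neq s$. Once this is in hand, the three bullets assemble directly into the lemma.
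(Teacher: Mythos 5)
Your proposal is correct and follows essentially the same route as the paper's proof: domination is verified by locating the private neighbours of $s$ inside $N[s]$ using the hypothesis that non-leaf neighbours of $s$ avoid $N_1(M)$, and minimality is verified by noting each leaf of $L$ has no neighbour in $M'$ while each $v \in M\setminus\{s\}$ keeps its private neighbour because that neighbour must lie outside $N[s] \supseteq N[L]$. The only cosmetic difference is that you invoke the ``Contract'' clause of Definition~\ref{def:ReconfigRule} where the paper's prose calls this move an expansion of $s$; the adjacency conclusion is the same either way.
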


\begin{proof}
First note that $M'$ is formed by expanding $s \in M$.
Thus it suffices to show that $M'$ is a minimal dominating set.
We now show that $M'$ is a dominating set.
Consider the private neighbours of $s$ with respect to $M$.
Any vertex not dominated by $M-\{s\}$ must be in $N[s]$.
No neighbour of $s$ which was a non-leaf was in $N_1(M)$.
Thus each non-leaf neighbour of $s$ is either in $M-\{s\}$ or was in $N_2(M)$.
In both cases the non-leaf neighbours of $s$ are still dominated by $M-\{s\}$.
Thus any private neighbour of $s$ is a leaf in $L$ or $s$ itself.
Thus $M-\{s\}$ dominates every vertex in $T$ except possibly $s$ and the leaves in $L$.
As $L$ dominates both $s$ and $L$ then $M'=M \cup L -\{s\}$ is a dominating set of $T$.

To show $M'$ is minimal, we will show that every vertex in $M'$ is critical.
Each vertex in $L$ is not adjacent to any other vertex in $M'$.
Thus by Observation \ref{obs:IsoImpliesCrit}, each vertex in $L$ is critical.
Now consider any vertex $v \in M-\{s\}$.
As $M$ was a minimal dominating set of $T$ and $v \in M$, then $v$ has a private neighbour $u$ with respect to $M$.
Note that $u \notin N[s]$ otherwise it would also have $s$ as a neighbour in $M$.
Furthermore $u \notin N[L]$ as $N[L] \subseteq N[s]$.
Therefore $u$ is still a private neighbour of $v$ in $M'$.
Thus $v$ is critical in $M'$ and $M'$ is a minimal dominating set of $T$.
\end{proof}

\begin{theorem}
If $G$ is a forest, then $\R(G)$ is connected.
\end{theorem}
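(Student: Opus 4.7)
By Theorem \ref{thm:disjointunion}, a forest $G = T_1 \cup \cdots \cup T_k$ has $\R(G) = \R(T_1) \square \cdots \square \R(T_k)$, and a Cartesian product of graphs is connected if and only if every factor is. So it suffices to prove $\R(T)$ is connected for every tree $T$, which I would do by strong induction on $|V(T)|$; the cases $|V(T)| \le 2$ are immediate. Assume $|V(T)| \ge 3$ and fix a longest path $v_0 v_1 \cdots v_k$ ($k \ge 2$); set $s = v_1$, $t = v_2$, and let $L$ be the set of leaf neighbours of $s$. The maximality of the path forces $N(s) = L \cup \{t\}$, and a short criticality argument shows any minimal dominating set $M$ is either \emph{Type A} ($s \in M$ and $L \cap M = \emptyset$) or \emph{Type B} ($s \notin M$ and $L \subseteq M$), since if $s \in M$ then any leaf of $s$ in $M$ would be non-critical, and if $s \notin M$ then every leaf of $s$ must itself be in $M$.

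The argument now splits into two steps. First, I would show that every Type A set is connected in $\R(T)$ to some Type B set. If $t \notin N_1(M)$ then Lemma \ref{lem:treeRefig} immediately produces the Type B set $M \cup L - \{s\}$ adjacent to $M$. Otherwise $t \in N_1(M)$, so $s$ is $t$'s only $M$-neighbour. In that situation I would first try the ``extended contraction'' $M \mapsto M \cup L \cup \{t\} - \{s\}$, obtained by expanding $s$ into its entire neighbourhood. A direct check shows this is always a dominating set and that the leaves and $t$ are critical in it, so the only possible obstruction to minimality is that some $v \in M - \{s\}$ loses all of its private neighbours once $t$ is added. Because $T$ is a tree, $v$ and $t$ share at most one common neighbour; hence this obstruction occurs only when $v$'s set of private neighbours is exactly $\{p\}$, where $p$ is the unique vertex on the two-edge path $v - p - t$.

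When such a ``bad'' vertex $v$ exists, I would perform a preparatory slide $M \mapsto M - \{v\} + \{p\}$ before invoking Lemma \ref{lem:treeRefig}. Since $p$ is a private of $v$, no other member of $M$ can be adjacent to $p$; consequently the slide is a valid reconfiguration move whose outcome $M^*$ is a minimal dominating set in which $p$ is self-private, $v$'s role is absorbed by $p$, and every other vertex of $M$ keeps its private neighbours. Crucially $t$ now has $p$ as an $M^*$-neighbour in addition to $s$, so $t \notin N_1(M^*)$ and Lemma \ref{lem:treeRefig} applies to $M^*$, yielding a Type B set reached from $M$ in two moves.

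Second, I would show the Type B sets induce a connected subgraph of $\R(T)$. Let $T_t = T - L - \{s\}$, which is a tree strictly smaller than $T$. The map $M \mapsto M - L$ is a bijection between Type B minimal dominating sets of $T$ and minimal dominating sets of $T_t$, with inverse $N \mapsto N \cup L$; minimality transfers in both directions because $L$ is not adjacent to any vertex of $T_t$ and $s \notin M$. I would also verify that this bijection preserves adjacency in $\R$: the symmetric difference of two Type B sets avoids $L \cup \{s\}$ and therefore lies in $V(T_t)$, and $N_T(v) - (L \cup \{s\}) = N_{T_t}(v)$ for every $v \in V(T_t)$. The inductive hypothesis applied to $T_t$ then makes the Type B part of $\R(T)$ connected, which together with the first step gives $\R(T)$ connected. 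The main obstacle is the exceptional sub-case in the first step; its resolution hinges on the tree-specific fact that $v$ and $t$ share at most one common neighbour in $T$, which is exactly what forces the preparatory slide to yield a minimal dominating set and reduces the situation to one where Lemma \ref{lem:treeRefig} directly applies.
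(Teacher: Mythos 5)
Your overall architecture (reduce to trees via Theorem \ref{thm:disjointunion}, pick a deepest stem $s$ with unique non-leaf neighbour $t$, split the minimal dominating sets into those containing $s$ and those containing its leaf set $L$, and handle the latter class by induction on $T-L-\{s\}$) is the same as the paper's. The genuine gap is in your exceptional sub-case $s\in M$, $t\in N_1(M)$. Your preparatory slide $M\mapsto M^*=M-\{v\}+\{p\}$ need not yield a minimal dominating set: it is true that no vertex of $M-\{v\}$ is adjacent to $p$, but a vertex $u\in M-\{v\}$ may have a private neighbour $q$ that is adjacent to $p$, and in $M^*$ that $q$ becomes dominated by $p$; if $q$ was $u$'s only private neighbour, $u$ loses criticality. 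Concretely, take the tree with edges $\ell s$, $st$, $tp$, $pv$, $vw$, $wy$, $pq$, $qu$, $uz$, $zz'$, choose the longest path $\ell\, s\, t\, p\, q\, u\, z\, z'$ (so $s=v_1$, $t=v_2$, $L=\{\ell\}$), and let $M=\{s,v,w,u,z\}$. This is a minimal dominating set with $t\in N_1(M)$, and $v$ is your bad vertex with unique private neighbour $p\in N(v)\cap N(t)$. The slide gives $M^*=\{s,p,w,u,z\}$, in which $u$ has no private neighbour ($q$ is now covered by $p$, and $u$ itself by $z$), so $M^*\notin\M(T)$ and your two-move path does not exist. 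The sentence ``every other vertex of $M$ keeps its private neighbours'' is therefore false; one could try to cascade further slides, but that would require a termination and correctness argument you have not supplied.

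The paper avoids this trap by not connecting each such $M$ individually to a Type~B set. It observes that the minimal dominating sets containing $s$ but no vertex of $N(s)$ --- a class that contains all of your problematic sets, since $t\in N_1(M)$ forces $t\notin M$ --- induce a copy of $\R(T-N[s])$ by Lemma \ref{lem:subgraph}, which is connected by the inductive hypothesis; it then joins a single well-chosen representative to the Type~B class, namely $M_x\cup\{s\}$ where $M_x$ is a minimal dominating set of $T-N[s]$ containing a neighbour $x$ of $t$, so that $t\in N_2(M_x\cup\{s\})$ and Lemma \ref{lem:treeRefig} applies directly. Adopting that device repairs your argument; the rest of your proof (the Cartesian-product reduction and the bijection showing the Type~B sets induce $\R(T-L-\{s\})$) is sound. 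A small additional point: your setup tacitly assumes $t=v_2$ is not a leaf, which fails exactly when $T$ is a star ($k=2$, $t\in L$, and $T-L-\{s\}$ is empty), so that case should be disposed of separately, as the paper does.
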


\begin{proof}
We proceed by induction on the number of edges $m$ of $G$.
If $m=0$ then $G \cong \overline{K_n}$ and hence $\R(G)=K_1$ which is connected.

Now suppose every forest with at most $m \geq 0$ edges has a connected reconfiguration graph.
Let $G$ be a forest with $m+1$ edges.
Note if every tree in $G$ has at most $m$ edges then each component of $G$ has a connected reconfiguration graph.
Moreover, it follows from Theorem \ref{thm:disjointunion} that $\R(G)$ is connected.
So suppose some tree $T$ in $G$ has $m+1$ edges.
Note that every other component in $G$ (should there be any) is isomorphic to $K_1$.
Therefore $\R(G)=\R(T)$.
In the case where $T$ is a star graph $K_{1,n}$ then $\R(T)=K_2$ which is connected.
So we may assume $T$ is not a star graph.
In $T$, let $s$ be a stem in $T$ with exactly one non-leaf neighbour.
We note that such a stem exists by rooting $T$ at any vertex and letting $s$ be the stem of greatest depth.
Let $L$ be the set of leaf neighbours of $s$.
Any minimal dominating set of $T$ contains $s$ or every leaf in $L$, but not both.
That is if a minimal dominating set contains $s$ then it does not contain any leaves from $L$.
Let $\mathcal{M}_s$ and $\mathcal{M}_L$ denote the minimal dominating sets of $G$ containing $s$ and $L$ respectively.
Note that $\mathcal{M}_s$ and $\mathcal{M}_L$ partition all minimal dominating sets of $G$.
Let $T_L=T-N[L]$ be the tree formed by removing the leaves in $L$ and their collective neighbour $s$ from $T$.
Note that $T_L$ has $|L|+1$ fewer edges than $T$ and hence by our inductive hypothesis $\R(T_L)$ is connected.
Since $L$ is an independent set then by Lemma \ref{lem:subgraph} we have that $\mathcal{M}_L$ induces $\R(T_L)$ as an induced subgraph of $\R(T)$.
It now suffices to show that every minimal dominating set in $\mathcal{M}_s$ is connected to some minimal dominating set in $\mathcal{M}_L$.
Let $M \in \mathcal{M}_s$ and let $v$ be the one non-leaf neighbour of $s$.
We will consider the two cases where $v \in M$ and $v \notin M$.
If $v \in M$ then let $M'=(M \cup L)-\{s\}$.
Note that $v$ is the only neighbour of $s$ in $T$ and $v \notin N_1(M)$.
Therefore by Lemma \ref{lem:treeRefig}, $M'$ is minimal and adjacent to $M$ in $\R(T)$.
Moreover $M' \in \mathcal{M}_L$, so $M$ is connected to $\mathcal{M}_L$.

Now suppose $v \notin M$ and let $\mathcal{M}_{s,v}$ denote all such minimal dominating sets of $T$.
Note that $\mathcal{M}_{s,v}$ is collection of all dominating sets of $T$ which contain $s$ but no vertex in $N(S)$.
Let $T_s=T-N[s]$ be the forest formed by removing $s$, $v$ and the leaves in $L$.
We may assume $T_s$ has at least one vertex, otherwise $T \cong K_{1,n}$ which was dealt with in an earlier argument.
Note that $T_s$ has at least $|L|+1$ fewer edges than $T$ and hence by our inductive hypothesis $\R(T_L)$ is connected.
By Lemma \ref{lem:subgraph} we have that $\mathcal{M}_{s,v}$ induces $\R(T_s)$ as an induced subgraph of $\R(T)$.
Thus is suffices to show that one minimal dominating set $M \in \mathcal{M}_{s,v}$ is connected to one minimal dominating set $M' \in \mathcal{M}_L$. 
Note that any minimal dominating set of $T_s$ union with $\{s\}$ is a minimal dominating set in $\mathcal{M}_{s,v}$
For any vertex $x$ in $T_s$, there exists a minimal dominating set of $T_s$ which contains $x$. 
Let $x$ be a vertex adjacent to $v$ in $T$ and $M_x$ be a minimal dominating set of $T_s$ containing $x$.
Now let $M=M_x \cup \{s\}$.
Then $v$ has at least two neighbours $s, x \in M$ and hence $v \in N_2(M)$.
Thus the only non-stem neighbour of $s$ in $T$ is not in $N_1(M)$.
Let $M' = M \cup L -\{s\}$.
By Lemma \ref{lem:treeRefig}, $M'$ is minimal and adjacent to $M$ in $\R(T)$.
Moreover $M' \in \mathcal{M}_L$.
Therefore $M$ and every other minimal dominating set in $\mathcal{M}_{s,v}$ is connected to a minimal dominating set in $\mathcal{M}_L$.
\end{proof}

Another family of interest is \emph{split graphs}. A graph is considered a split graph if you can partition its vertex set into a clique and an independent set. For a split graph $G$, we let $C_G$ and $I_G$ denote the clique and independent set of $G$. In \cite{BERTOSSI198437}, Bertossi showed the problem of finding a minimum cardinality dominating set is NP-complete for split graphs. Additionally, it was shown in \cite{COUTURIER2015634} that a split graph of order $n$ can have up to $3^{\frac{n}{3}}$ minimal dominating sets. Despite this will show that a split graph of order $n$ have a connected reconfiguration graph with $O(n)$ diameter. We will first give a useful lemma

\begin{lemma}
\label{lem:splitReconfig}
Let $G$ be a split graph with minimal dominating set $M$ containing some $v \in C_G$. Let $M_v$ be any minimal dominating set of the subgraph of $G$ induced by the external private neighbours of $v$ with respect to $M$. Then $M'=M \cup M_v -\{v\}$ is a minimal dominating set and hence adjacent to $M$ in $\R(T)$.
\end{lemma}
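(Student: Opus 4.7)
The plan is to verify that $M'$ is a minimal dominating set of $G$; the adjacency $M \sim M'$ in $\R(G)$ then follows immediately from Definition~\ref{def:ReconfigRule}, since $M \setminus M' = \{v\}$ and $M' \setminus M = M_v \subseteq N(v)$ (external private neighbours of $v$ lie in $N(v)$) constitute an expansion of $v$.

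To show $M'$ dominates $G$, I would observe that every vertex with a dominator in $M \setminus \{v\} \subseteq M'$ remains dominated, so the only candidates for losing their dominator are the private neighbours of $v$ with respect to $M$: namely the external private neighbours (vertices in $V \setminus M$ whose unique $M$-neighbour is $v$) together with $v$ itself if $v$ is self-private. By hypothesis $M_v$ dominates the induced subgraph on the external private neighbours, and since $M_v \subseteq N(v)$, any nonempty $M_v$ also dominates $v$.

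For minimality I would check that every vertex of $M'$ is critical. Take $w \in M \setminus \{v\}$ and fix a private neighbour $p$ of $w$ with respect to $M$; it suffices to show no $y \in M_v$ is adjacent to $p$. If $p = w$, any such adjacency would give $w \in N[y] \cap M = \{v\}$, forcing $w = v$, a contradiction. Otherwise $p \in V \setminus M$ with $N[p] \cap M = \{w\}$; the case $p \in C_G$ is ruled out because $v \in C_G$ would give $v \sim p$ and hence $v \in N[p] \cap M$, so $p \in I_G$ and therefore $w \in N(p) \subseteq C_G$. Now $y \in M_v \cap C_G$ would force $C_G \cap M = \{v\}$ (from $N[y] \cap M = \{v\}$), contradicting $w \in C_G \cap M$. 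So $y \in I_G$, and $y \not\sim p$ since $I_G$ is independent. For $w \in M_v$, the criticality of $w$ inside $M_v$ supplies a vertex $q$ among the external private neighbours of $v$ with $N[q] \cap M_v = \{w\}$; since $q$'s sole $M$-neighbour was $v \notin M'$, this upgrades to $N[q] \cap M' = \{w\}$.

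The main obstacle is the minimality step, specifically ruling out the possibility that a vertex of $M_v$ creates an alternative dominator for the private neighbour of some $w \in M \setminus \{v\}$. This is exactly where the split-graph hypothesis $v \in C_G$ is essential: it forces both $p$ and $y$ into the independent set $I_G$, after which no new adjacencies are possible.
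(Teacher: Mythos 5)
Your proof is correct and follows essentially the same route as the paper's: the minimality check rests on the same case analysis, using $v \in C_G$ to force any would-be new dominator of a private neighbour of $w \in M \setminus \{v\}$ into the independent set $I_G$, where independence kills the adjacency, and the criticality of vertices of $M_v$ comes from the fact that external private neighbours of $v$ see nothing of $M \setminus \{v\}$. You are in fact slightly more thorough than the paper, which omits the explicit verification that $M'$ dominates $G$; note only that both arguments quietly rely on $M_v \neq \emptyset$ in the case where $v$ is its own private neighbour.
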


\begin{proof} 
Let $G_v$ be the subgraph of $G$ induced by the external private neighbours of $v$ with respect to $M$.
Moreover let $M'=M \cup M_v -\{v\}$ where $M_v$ is a minimal dominating set of $G_v$.
Note no vertices in $G_v$ are adjacent to any vertex in $M-\{v\}$.
Thus every vertex in $M_v$ is critical in $M'$.
Thus it suffices to show that each vertex in $M-\{v\}$ is critical in $M'$.
Let $u \in M-\{v\}$ and consider two cases: $u \in I_G$ and $u \in C_G$.
If $u \in I_G$ then $u \in a_2(M)$ as $v$ would also dominate every neighbour of $u$ in $C_G$.
Moreover no neighbour of $u$ could be in $M_v$ and hence $u \in a_2(M')$.
So suppose $u \in C_G$.
Then $u$ must have an external private neighbour $u' \in I_G$ with respect to $M$.
It suffices to show that $u'$ is not adjacent to any vertex in $M_v$.
To show a contradiction, suppose not, that is suppose some $x \in M_v$ is adjacent to $u'$.
Then $x \in C_G$ and is an external private neighbour of $v$ with respect to $M$.
However $v, u \in C_G$ and $u,v \in M$.
So $x$ was dominated by both $u$ and $v$ in $M$ which contradicts the fact that $x$ is a private neighbour of $v$ with respect to $M$.
Therefore $u$ has $u'$ as an external private neighbour in $M'$ and hence $u \in a_1(M')$.
\end{proof}

\begin{theorem}
If $G$ is a split graph, the $\R(G)$ is connected with diameter at most $2|I_G|+1$
\end{theorem}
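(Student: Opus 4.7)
The plan is to fix a canonical minimal dominating set $M^*$ of $G$ and show that every minimal dominating set $M$ is within $|I_G|$ steps of $M^*$ in $\R(G)$; the triangle inequality will then give diameter at most $2|I_G|$. The boundary case $|I_G|=0$, where $G=K_n$ and $\R(K_n)=K_n$ has diameter $1=2|I_G|+1$, is handled directly by Theorem~\ref{thm:families1}, so assume $|I_G|\ge 1$.

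Choose $M^*$ by cases. If $I_G$ is a dominating set of $G$ (equivalently, every $v\in C_G$ has a neighbour in $I_G$), set $M^*:=I_G$; Observation~\ref{obs:IsoImpliesCrit} makes each vertex of $I_G$ critical, since it is isolated within the independent set $I_G$, so $M^*$ is minimal. Otherwise pick $v_0\in C_G$ with $N(v_0)\cap I_G=\emptyset$ and set $M^*:=\{v_0\}\cup I_G$: it dominates $G$ since $v_0$ covers $C_G$, and it is minimal because $v_0$ and each $u\in I_G$ are isolated inside $M^*$. The key structural bound is that $|M\cap C_G|\le|I_G-M|$ whenever $|M\cap C_G|\ge 2$: each such $v$ has another clique-neighbour in $M$ and so lies in $a_1(M)$, and its external private neighbour $u_v\in N_1(M)$ cannot be a $C_G$-vertex (the other clique-members of $M$ would also dominate it), forcing $u_v\in I_G-M$, with distinct $u_v$'s for distinct $v$'s. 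In particular $|M\cap C_G|\le|I_G|$.

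The reconfiguration itself uses Lemma~\ref{lem:splitReconfig} iteratively. While $|M\cap C_G|\ge 2$, pick any $v\in M\cap C_G$: its external private neighbours all lie in $I_G-M$, so $G_v$ is an independent subgraph of $I_G$ whose unique minimal dominating set is $V(G_v)$, and this choice of $M_v$ yields an adjacent minimal dominating set with exactly one fewer clique-vertex. Once $|M\cap C_G|=1$, say $M\cap C_G=\{v\}$, a single further application of Lemma~\ref{lem:splitReconfig} reaches $M^*$: if $M^*=I_G$, take $M_v:=I_G-M$, which is a minimal dominating set of $G_v$ because every $C_G$-vertex of $G_v$ has no $I_G$-neighbour in $M$ and hence, by hypothesis, an $I_G$-neighbour in $I_G-M$; if $M^*=\{v_0\}\cup I_G$ and $v=v_0$, one checks $M=M^*$ directly (any $w\in I_G-M$ would need a neighbour in $\{v_0\}$, which is impossible), and if $v\ne v_0$ the choice $M_v:=\{v_0\}\cup(I_G-M)$ is a minimal dominating set of $G_v$ by exactly the verification used for $M^*$ itself. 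In every sub-case $M$ reaches $M^*$ in at most $|M\cap C_G|\le|I_G|$ steps, so $\R(G)$ is connected with diameter at most $2|I_G|\le 2|I_G|+1$.

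The most delicate part will be verifying that the final $M_v$ is genuinely a minimal dominating set of $G_v$: one must confirm that every proposed vertex lies in $G_v$ (in Case~B, $v_0\in C_G-\{v\}$ has no neighbour in $M\cap I_G$, so it is an external private neighbour of $v$), that $M_v$ dominates $G_v$ (using that $v_0$ is adjacent in the clique to every $C_G$-vertex of $G_v$), and that each vertex of $M_v$ is isolated inside $M_v$ so that Observation~\ref{obs:IsoImpliesCrit} can be invoked to upgrade domination to minimality.
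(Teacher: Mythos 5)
Your proof is correct, and while it runs on the same engine as the paper's --- Lemma~\ref{lem:splitReconfig}, applied to a clique vertex $v\in M\cap C_G$ whose external private neighbours necessarily lie in $I_G-M$ once $|M\cap C_G|\ge 2$ --- the bookkeeping is genuinely different. The paper inducts on $|I_G-M|$ and aims at the whole family $\mathcal{M}_I$ of minimal dominating sets containing $I_G$, which it first shows induces a clique in $\R(G)$; crossing that clique is what costs the extra $+1$ in the bound. You instead use $|M\cap C_G|$ as the potential, prove the auxiliary injection from $M\cap C_G$ into $I_G-M$ (sending each clique vertex to an external private neighbour, which cannot lie in $C_G$ when two clique vertices are present) to cap the number of steps by $|I_G|$, and steer every set to one fixed canonical $M^*$. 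This removes both the need for the clique observation and the paper's second case $|M\cap N(u)|\ge 2$, where it must hunt for a different $u'\in I_G-M$ with a unique $M$-neighbour. The payoff is a marginally sharper bound of $2|I_G|$ rather than $2|I_G|+1$; the cost is the extra case analysis at the final step ($M^*=I_G$ versus $M^*=\{v_0\}\cup I_G$, and $v=v_0$ versus $v\ne v_0$), all of which you verify correctly --- in particular the necessary checks that $G_v\ne\emptyset$, that every vertex of the proposed $M_v$ really is an external private neighbour of $v$, and that $M_v$ dominates the clique vertices of $G_v$ go through in every sub-case.
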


\begin{proof}
Let $\mathcal{M}_I$ denote the minimal dominating sets of $G$ which contain every vertex in $I_G$.
Note that if $I_G$ dominates $G$ then $\mathcal{M}_I=\{I_G\}$.
Otherwise $\mathcal{M}_I=\{I_G \cup \{u\}: u \in V(G)-N[I_G]\}$.
In both cases $\mathcal{M}_I$ induces a clique in $\R(G)$.
It suffices to show that every minimal dominating set $M$ is distance at most $|I_G-M|$ from some minimal dominating set in $\mathcal{M}_I$.
We will induct on $|I_G-M|$, that is the number of vertices from $I_G$ which are in $M$.
The case where $|I_G-M|=0$ is trivial.
So suppose for some $k \geq 0$ that our claim is true for all minimal dominating sets $M$ with $|I_G-M| \leq k$.
Let $u \in I_G-M$ and consider two cases: $|M \cap N(u)|=1$ and $|M \cap N(u)| \geq 2$.
If $|M \cap N(u)|=1$ then let $v$ be the lone neighbour of $u$ in $M$.
Note that $v \in C_G$ and let $G_v$ be the subgraph of $G$ induced by the external private neighbours of $v$ with respect to $M$.
Note that $u$ is an external private neighbour of $v$ with respect to $M$.
Let $M_v$ be any minimal dominating set of $G_v$ which contains $u$.
Then by Lemma \ref{lem:splitReconfig}, $M'=M \cup M_v - \{v\}$ is minimal dominating set connect to $M$ in $\R(G)$.
Moreover $|I_G-M'| \leq k$ and thus $M'$ is distance at most $k$ from some minimal dominating set in $\mathcal{M}_I$.
Hence $M$ is distance at most $k+1$ from some minimal dominating set in $\mathcal{M}_I$. If $|M \cap N(u)| \geq 2$ then $u$ is dominated by at least two vertices in $C_G$.
As $M$ is minimal, then each of these vertices have private neighbours in $I_G$. 
Hence there exists a vertex $u' \in I_G-M$ with $|M \cap N(u')|=1$ and hence this case follows from the above argument.
\end{proof}

Although forests and split graphs have connected $\R(G)$, not all reconfiguration graphs are connected. The smallest example occurs on 6 vertices. The graph shown below, $K_3 \Box P_2$ in Figure \ref{fig:isolated} has a disconnected reconfiguration graph.

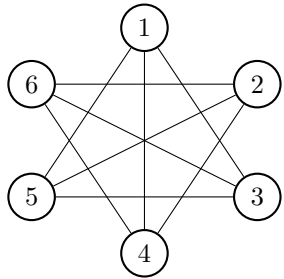
\begin{figure}[!h]
\def\c{1}
\centering
\scalebox{\c}{
\begin{tikzpicture}
\begin{scope}[every node/.style={circle,thick,draw}]
    \node[shape=circle,draw=black,fill=white] (1) at (0,1.5) {1};
    \node[shape=circle,draw=black,fill=white] (2) at (1.5,0.75) {2};
    \node[shape=circle,draw=black,fill=white] (3) at (1.5,-0.75) {3};
    \node[shape=circle,draw=black,fill=white] (4) at (0,-1.5) {4};
    \node[shape=circle,draw=black,fill=white] (5) at (-1.5,-0.75) {5};
    \node[shape=circle,draw=black,fill=white] (6) at (-1.5,0.75) {6};

\end{scope}

\begin{scope}
    \path [-] (1) edge node {} (3);
    \path [-] (1) edge node {} (4);
    \path [-] (1) edge node {} (5);
    
    \path [-] (2) edge node {} (4);
    \path [-] (2) edge node {} (5);
    \path [-] (2) edge node {} (6);
    
    \path [-] (3) edge node {} (5);
    \path [-] (3) edge node {} (6);
    
    \path [-] (4) edge node {} (6);

\end{scope}
\end{tikzpicture}}
\caption{A graph with disconnected $\R(G)$}%
\label{fig:isolated}%
\end{figure}

\noindent The reconfiguration graph $\R(K_3 \Box K_2)$ has 10 vertices. However, the vertices corresponding to the minimal dominating sets $\{1,3,5\}$ and $\{2,4,6\}$. We will now give a general construction which shows that $\R(G \Box K_2)$ is disconnected for all graphs $G$ with $\delta(G) \geq 2$.

\begin{theorem}
\label{thm:disconnected}
Let $G$ and $H$ be graphs of equal order with $\delta(G)\geq 2$ and $\delta(H)\geq 1$. Let $M(G,H)$ denote a graph from one copy of $G$ and $H$ joined by a perfect matching from $G$ to $H$. Then the reconfiguration graph of $M(G,H)$ has an isolated vertex.
\end{theorem}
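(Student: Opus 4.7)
The plan is to show that $V(G)$, viewed as a subset of $V(M(G,H))$, is itself an isolated vertex of $\R(M(G,H))$. Label the matching edges by $\{u_iv_i : 1 \leq i \leq n\}$ with $u_i \in V(G)$ and $v_i \in V(H)$. First I would verify that $V(G)$ is a minimal dominating set of $M(G,H)$: it dominates $V(H)$ through the matching, and for each $j$ the set $V(G) \setminus \{u_j\}$ fails to dominate $v_j$, since $u_j$ is the unique neighbor of $v_j$ in $V(G)$.

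Next I would classify every minimal dominating set $M_2$ that could be adjacent to $V(G)$ in $\R(M(G,H))$. In both the Expand and Contract cases of Definition~\ref{def:ReconfigRule}, the distinguished vertex $v$ lies in $V(G) \triangle M_2$, and the opposite side of the symmetric difference is forced into $N(v)$. Since the matching is perfect, $N(u_j) \cap V(H) = \{v_j\}$ and $N(v_j) \cap V(G) = \{u_j\}$, so a short case analysis shows the only candidate is $M_j := (V(G) \setminus \{u_j\}) \cup \{v_j\}$ for some $j$: the alternatives $V(G) \setminus \{u_j\}$ and $V(G) \cup \{v_j\}$ are ruled out because the former fails to dominate $v_j$ while the latter properly contains the dominating set $V(G)$ and so cannot be minimal.

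The core of the argument is to prove that no $M_j$ is in fact minimal. Using $\delta(H) \geq 1$, I would choose a neighbor $v_k$ of $v_j$ in $H$, so $k \neq j$, and then check that $u_k$ is not critical in $M_j$ by verifying that $M_j \setminus \{u_k\}$ still dominates every vertex of $M(G,H)$: the vertex $v_k$ is covered by $v_j \in M_j \setminus \{u_k\}$; each $v_\ell$ with $\ell \notin \{j,k\}$ is covered by its match $u_\ell$; and both $u_j$ and $u_k$ retain a $G$-neighbor in $V(G) \setminus \{u_j, u_k\} \subseteq M_j \setminus \{u_k\}$ because $\delta(G) \geq 2$. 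Hence $M_j$ is not minimal, and so $V(G)$ has no neighbor in $\R(M(G,H))$.

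The main obstacle, and the place where both degree hypotheses become sharp, is this final verification: $\delta(H) \geq 1$ supplies the substitute $H$-dominator $v_j$ for $v_k$, while $\delta(G) \geq 2$ prevents $u_j$ and $u_k$ from losing all their dominators once $u_k$ is removed from $V(G)\setminus\{u_j\}$.
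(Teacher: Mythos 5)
Your proposal is correct and follows essentially the same route as the paper: identify $V(G)$ as a minimal dominating set, observe that the matching forces any potential neighbour in the reconfiguration graph to be the single-token slide $M_j=(V(G)\setminus\{u_j\})\cup\{v_j\}$, and then use $\delta(H)\geq 1$ to pick a neighbour $v_k$ of $v_j$ and $\delta(G)\geq 2$ to show its match $u_k$ is no longer critical in $M_j$. Your explicit dismissal of the degenerate sets $V(G)\setminus\{u_j\}$ and $V(G)\cup\{v_j\}$ is a minor point of added care, not a different argument.
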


\begin{proof}
Let $M$ be the set of all vertices in $G$.
Note that $M$ is a dominating set of $M(G,H)$.
Moreover, every vertex in $M$ has its matched neighbour in $H$ as a private neighbour.
Therefore $M$ is a minimal dominating set of $M(G,H)$.
Note that each vertex in $M$ has exactly one neighbour not in $M$.
Thus any expansion of $M$ would be by token slide.
Moreover each vertex not in $M$ has exactly one neighbour in $M$.
Thus any contraction of $M$ would also be by token slide.
It suffices to show that the token slide of any vertex in $M$ to its lone neighbour in $H$ would not be a minimal dominating set.
Let $v \in M$ and $v'$ be its lone neighbour in $H$.
Consider $M_v = M \cup \{v'\}-\{v\}$.
As $\delta(H) \geq 1$, then $v'$ has atleast one neighbour in $H$.
Let $u'$ be a neighbour of $v'$ in $H$.
Moreover let $u$ be the lone neighbour of $u'$ in $G$.
We will now show that $u$ is not a critical vertex in $M_v$.
Note that $u'$ is adjacent to both $u$ and $v'$ in $M_v$.
Therefore $u$ no longer has $u'$ as a private neighbour in $M_v$.
Every other neighbour of $u$ is in $G$.
However, as $\delta(G) \geq 2$ then every vertex in $G$ is dominated by atleast two vertices in $M'$.
Therefore $u$ has no private neighbours with respect to $M_v$.
Thus $M_v$ is not a minimal dominating set.
Hence the token slide of any vertex in $M$ to its lone neighbour in $H$ would not be a minimal dominating set.
\end{proof}

\begin{corollary}
\label{cor:disconnected}
Let $G$ be a graph with $\delta(G)\geq 2$, then $\R(G \Box K_2)$ is disconnected.
\end{corollary}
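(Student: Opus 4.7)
The plan is to derive this as a direct application of Theorem \ref{thm:disconnected} with $H = G$. First, I would observe that $G \square K_2$ is exactly of the form $M(G,G)$ considered in Theorem \ref{thm:disconnected}. Indeed, the Cartesian product $G \square K_2$ has vertex set $V(G) \times \{1,2\}$, and its edges decompose into two copies of $G$ (on $V(G) \times \{1\}$ and $V(G) \times \{2\}$) together with the perfect matching $\{(v,1)(v,2) : v \in V(G)\}$ coming from the $K_2$-factor. Since $\delta(G) \geq 2 \geq 1 = \delta(G)$, the hypotheses of Theorem \ref{thm:disconnected} are satisfied with both graphs equal to $G$, and the theorem yields that $\R(G \square K_2) = \R(M(G,G))$ contains an isolated vertex; in particular, the proof of that theorem shows that $V(G) \times \{1\}$ is such an isolated vertex.

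To conclude disconnectedness rather than just the presence of an isolated vertex, I would note that by the identical argument applied to the other side of the matching, $V(G) \times \{2\}$ is also a minimal dominating set of $G \square K_2$, distinct from $V(G) \times \{1\}$. Hence $\R(G \square K_2)$ has at least two vertices, one of which is isolated, so it cannot be connected.

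I do not anticipate a real obstacle: the only subtlety is to avoid the vacuous case in which the isolated vertex is the only vertex of $\R(G \square K_2)$, and this is immediately handled by exhibiting the symmetric minimal dominating set $V(G) \times \{2\}$. The entire argument is just a matching-up of notation between $G \square K_2$ and $M(G,G)$ followed by a citation of Theorem \ref{thm:disconnected}.
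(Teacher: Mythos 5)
Your proposal is correct and follows exactly the route the paper intends: the corollary is stated as an immediate consequence of Theorem \ref{thm:disconnected} applied with $H=G$, since $G \square K_2 = M(G,G)$. Your extra remark that $V(G)\times\{2\}$ gives a second vertex of $\R(G \square K_2)$, ruling out the degenerate one-vertex case, is a small but legitimate point of care that the paper leaves implicit.
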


We remark that the conditions of Corollary \ref{cor:disconnected} can weakened slightly to include graphs $G$ with some component which is not isomorphic to $K_1$ or $K_{1,n}$. If the reconfiguration of this component is disconnected then so is its Cartesian product with the reconfiguration graph of the other components of $G$. Moreover if some component of $G$ is not isomorphic to $K_1$ or $K_{1,n}$ then every vertex $v$ is adjacent to a vertex $u$ with degree 2. It will follows from a similar argument to that of Theorem \ref{thm:disconnected} that $u$ will not be critical in $M_v$.


\section{Open Problems}
\label{sec:problems}

For a reconfiguration graphs there are several questions one might ask:

\begin{itemize}
\item[$(i)$] For what graph  $\R(G)$ connected?
\item[$(ii)$] Given a graph $H$, classify all graphs $G$ such that $\R(G)=H$.
\item[$(iii)$] Which graphs can be reconfiguration graphs?
\item[$(iv)$] When does $\R(G) \cong G$?
\item[$(v)$] What is the diameter of $\R(G)$?
\item[$(vi)$] What is the girth of $\R(G)$?
\item[$(vii)$] Can we bound the minimum and maximum degree of $\R(G)$?
\end{itemize}

For $(i)$ we showed that trees and split graphs have connected $\R(G)$.
For $(ii)$ we showed $\R(G)=K_n$ if and only if $G$ was a threshold graph.
Moreover, we showed that $\R(G)=\overline{K_n}$ if and only if $G$ was an empty graph and $n=1$.
We also offer the following conjecture.

\begin{conjecture}
Let $G$ be a graph. $\R(G)$ is a tree if and only if $G \cong \overline{K_n} \cup K_{1,m}$ for some non-negative integers $m$ and $n$.
\end{conjecture}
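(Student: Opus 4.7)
The plan is to split the conjecture into its two directions. The forward implication follows quickly: by the corollary to Theorem~\ref{thm:disjointunion}, $\R(\overline{K_n} \cup K_{1,m}) = \R(K_{1,m})$, which is $K_2$ for $m \geq 1$ by Theorem~\ref{thm:Kmn}(i) and $K_1$ for $m = 0$; either way a tree.

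For the reverse implication, assume $\R(G)$ is a tree. First I would show that $G$ has at most one connected component carrying an edge. If $G_1$ and $G_2$ were two such components, each $\R(G_i)$ would contain an edge by Theorem~\ref{thm:emptygraph}, and iterated application of Theorem~\ref{thm:disjointunion} would give $\R(G) \supseteq \R(G_1) \Box \R(G_2) \supseteq K_2 \Box K_2 = C_4$, contradicting acyclicity. So $G \cong H \cup \overline{K_n}$ for some connected $H$, and the same corollary yields $\R(G) = \R(H)$. The claim therefore reduces to: if $H$ is connected with $|V(H)| \geq 2$ and $\R(H)$ is a tree, then $H \cong K_{1,m}$.

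I would prove the contrapositive by strong induction on $|V(H)|$: any connected non-star $H$ with $|V(H)| \geq 2$ has a cycle in $\R(H)$. The base case $|V(H)| \leq 3$ is immediate since the unique connected non-star on at most three vertices is $K_3$, and $\R(K_3) = K_3$. For the inductive step I would handle three cases. If $H$ has a universal vertex, write $H = H' \vee K_1$ where $H'$ must be non-empty (else $H$ would itself be a star); Theorems~\ref{thm:joinK1} and~\ref{thm:emptygraph} then give $\R(H) = \R(H') \vee K_1 \supseteq K_3$. If $H$ has no universal vertex and is complete multipartite $K_{n_1,\ldots,n_\ell}$ (so $\ell \geq 2$ and every $n_i \geq 2$), then for any $i \in N_1$ and distinct $j, j' \in N_2$ the four minimal dominating sets $N_1$, $N_2$, $\{i,j\}$, $\{i,j'\}$ form an induced $C_4$ in $\R(H)$ by the adjacency analysis in the proof of Theorem~\ref{thm:MultiParitite}.

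The main obstacle is the remaining case, where $H$ is connected, non-star, has no universal vertex, and is not complete multipartite. Theorem~\ref{thm:G-N[v]_Empty} produces a vertex $v$ with $H - N[v]$ containing an edge, and Lemma~\ref{lem:subgraph} embeds $\R(H - N[v])$ as an induced subgraph of $\R(H)$. If $H - N[v]$ has two non-trivial components, or a non-trivial component that is not a star, then induction or Theorem~\ref{thm:disjointunion} yields a cycle in $\R(H - N[v])$ that lifts to $\R(H)$. The genuinely hard subcase is when for every permitted $v$ (and indeed for every independent set $S$) the non-trivial part of $H - N[v]$ is a single star; $P_5$ illustrates this, since no application of Lemma~\ref{lem:subgraph} inside $P_5$ yields a cyclic induced subgraph, yet $\R(P_5) = C_4$ does contain a cycle realised by minimal dominating sets of sizes two and three. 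To finish this subcase I would build a $4$-cycle directly: when $H$ has diameter at least three it contains an induced $P_4 = u_1 u_2 u_3 u_4$, and combined with a suitable fixed minimal dominating set on the rest of $H$, expansions at $u_2$ and $u_3$ should produce four minimal dominating sets that form a cycle in $\R(H)$, paralleling the $P_5$ computation. The few remaining non-star connected graphs with diameter at most two that are neither complete multipartite nor have a universal vertex (for example $C_5$, covered by Theorem~\ref{thm:families1}(iii)) form a restricted class that I would dispatch separately. Finding a uniform, clean $4$-cycle construction together with verifying minimality and the four reconfiguration adjacencies is where I anticipate the bulk of the technical work.
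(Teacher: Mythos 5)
First, there is nothing in the paper to compare your argument against: the statement is posed as an open conjecture in Section~\ref{sec:problems}, and the author gives no proof. Judged on its own terms, your ``if'' direction is correct and complete, and most of your ``only if'' reduction is sound: the Cartesian-product argument forcing at most one non-trivial component, the universal-vertex case via Theorems~\ref{thm:joinK1} and~\ref{thm:emptygraph}, and the complete multipartite case all check out. In particular the induced $4$-cycle $N_1,\{i,j\},N_2,\{i,j'\}$ is genuine, since $\{i,j\}$ and $\{i,j'\}$ lie in the same block $N_1\times N_2$ of $AFR(n_1,\ldots,n_\ell)$ and are therefore non-adjacent, while both meet $N_1$ and $N_2$. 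Likewise, the induction through Lemma~\ref{lem:subgraph} works whenever some $H-N[v]$ has a non-star component with an edge, or two components with edges.

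The genuine gap is the terminal case, and you have flagged it yourself: $H$ connected, non-star, no universal vertex, not complete multipartite, yet every $H-N[v]$ (and every $H-N[S]$ reachable through Lemma~\ref{lem:subgraph}) is a star plus isolated vertices. Your plan there --- build a $4$-cycle from an induced $P_4=u_1u_2u_3u_4$ ``combined with a suitable fixed minimal dominating set on the rest of $H$'' --- is a hope, not an argument. The four sets must be minimal dominating sets of all of $H$, and minimality does not localize: expanding $u_2$ or $u_3$ can destroy the private neighbour of a vertex outside the path, and conversely the fixed part may dominate some $u_i$ and collapse your intended adjacencies from token slides into non-edges or non-minimal sets. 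The $\R(P_5)=C_4$ computation does not obviously relativize to a $P_4$ sitting inside a larger graph. The diameter-$2$ residue is also not pinned down: you have not characterized which connected, non-star, non-multipartite, universal-vertex-free graphs of diameter $2$ have every $G-N[v]$ empty or a star, and ``dispatch separately'' with $C_5$ as the lone example is not a proof that the class is finite or tractable. So what you have is a reasonable programme that settles every case except the one that makes this a conjecture rather than a theorem; the write-up should present it as such.
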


For $(iii)$ although not explicitly investigated in this paper, we know that not all graphs can be reconfiguration graphs.
Theorem \ref{thm:emptygraph} and Theorem \ref{thm:families1} $(i)$ imply that $\overline{K_n}$ for $n\geq 2$ can not be a reconfiguration graphs.
To that regard we suspect $\R(G)$ has girth at lost 5 with the exception of when $\R(G)$ is a tree. 

We conclude with a remark on $(vii)$. We know from Theorem \ref{thm:joinK1} that $\R(G)$ can have a universal vertex when $G$ has a universal vertex. Thus the maximum degree of $\R(G)$ for a general graph $G$ is bounded above by the number of minimal dominating sets in $G$ less one. It was shown in \cite{COUTURIER2015634} that the number of minimal dominating sets in $G$ could be exponential in terms of the order of $G$. From Theorem $\ref{thm:Kmn}$ we know that $\R(K_{n,n})$ has maximum degree $n^2$ which is still much higher than the order of $K_{n,n}$. In the next theorem we will give a family of graphs whose reconfiguration graph has maximum degree less than the order of the graph. We first will use a useful lemma comparing the size of $a_1(S)$ and $N_1(S)$.

\begin{lemma}\cite{2021Beaton}
\label{lem:a1n1}
 Let $G$ be a graph. For any dominating set $S$ of $G$ we have $|a_1(S)| \leq |N_1(S)|$.
\end{lemma}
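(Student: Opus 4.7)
The plan is to exhibit an injection $f \colon a_1(S) \to N_1(S)$, which immediately gives the desired inequality. The construction of $f$ is forced by the definitions: given $v \in a_1(S)$, the definition says $N[v] \cap N_1(S) \neq \emptyset$, so I would pick any $u$ in this intersection and set $f(v) = u$.

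Before verifying injectivity I would make one small observation to clean things up: since $a_1(S) \subseteq a(S) \subseteq S$ while $N_1(S) \subseteq V-S$, we have $v \notin N_1(S)$, so the chosen $u$ actually lies in $N(v) \cap N_1(S)$ rather than merely in $N[v] \cap N_1(S)$. In particular, $v$ is a neighbour of $u$ that lies in $S$.

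Injectivity is then immediate. Suppose $f(v) = f(v') = u$ for some $v, v' \in a_1(S)$. Then both $v$ and $v'$ are neighbours of $u$ lying in $S$. But $u \in N_1(S)$ means $|N[u] \cap S| = 1$, and because $u \notin S$ this forces $|N(u) \cap S| = 1$. Hence $v = v'$, and $f$ is injective, yielding $|a_1(S)| \leq |N_1(S)|$.

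The argument is essentially a one-line counting reduction, so I do not expect any real obstacle; the only thing to be careful about is distinguishing $N[v]$ from $N(v)$ when setting up the map, which is handled by the trivial observation that $a_1(S)$ and $N_1(S)$ are disjoint.
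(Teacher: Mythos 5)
Your injection argument is correct and complete: the map is well-defined since $N[v]\cap N_1(S)\neq\emptyset$ for $v\in a_1(S)$, and injectivity follows exactly as you say because a vertex $u\in N_1(S)$ has a unique neighbour in $S$. The paper cites this lemma from an external reference without reproducing a proof, but your counting-via-injection is the standard argument for it and there is nothing to add.
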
 

\begin{theorem}
\label{thm:maxdegree}
Let $G$ be a graph on $n$ vertices with girth at least 5. Then $\Delta (\R_G) \leq n-\gamma(G)$.
\end{theorem}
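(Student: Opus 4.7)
The plan is to prove the stronger bound $\deg_{\R(G)}(M) \leq n - |M|$ for every minimal dominating set $M$ of $G$; since $|M| \geq \gamma(G)$, this immediately yields $\Delta(\R(G)) \leq n - \gamma(G)$. The strategy is to build an injection $\tau$ from the set of neighbours of $M$ in $\R(G)$ into $V(G) \setminus M$. By Definition \ref{def:ReconfigRule}, every neighbour $M'$ of $M$ is of one of two types: a \emph{single-add} with $|M' - M| = 1$ (including token slides), or a \emph{multi-add} with $|M - M'| = 1$ and $|M' - M| \geq 2$. Set $\tau(M') = v$ when $M' - M = \{v\}$, and $\tau(M')$ equal to a chosen element of $M' - M$ in the multi-add case. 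For $v \in M$, write $P_1(v) = \{p \in N_1(M) : v \in N[p]\}$ for the external private neighbours of $v$ that lie in $N_1(M)$.

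The first key step is to show that a multi-add $M'$ with $M - M' = \{v\}$ and $S := M' - M$ must satisfy $S = P_1(v)$, and in particular $v \in a_1(M)$ with $|P_1(v)| \geq 2$. The inclusion $P_1(v) \subseteq S$ is forced: each $p \in P_1(v)$ has $v$ as its only $M$-neighbour, and triangle-freeness from girth $\geq 5$ rules out a common neighbour of $p$ and $v$ to dominate $p$ after $v$ is removed, so $p$ must itself enter $M'$. Conversely, if $s \in S \setminus P_1(v)$ then $s \in N_2(M)$ and retains a neighbour in $M \setminus \{v\}$ in $M'$, so $s$ is not self-private. Independence of $N(v)$ (again triangle-freeness) shows $s$'s only neighbour inside $S$ is itself, so any external private $p$ of $s$ in $M'$ has $M$-neighbours $\subseteq \{v\}$; $p \sim v$ would put $p \in P_1(v) \subseteq M'$, contradicting $p \notin M'$, while $p = v$ forces $N[v] \cap M' = S = \{s\}$, contradicting $|S| \geq 2$. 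Hence $s$ has no private neighbour in $M'$, violating minimality, so $S = P_1(v)$.

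The second key step is to show that for each $v \in V(G) \setminus M$ there is at most one single-add neighbour $M'$ with $M' - M = \{v\}$, and that the set $T := M - M'$ is determined by $v$. For each $u \in T$, any external private of $u$ in $M$ must, after $u$ is removed, be dominated by $v$; triangle-freeness forces that private neighbour to equal $v$, so $u \in a_2(M)$ unless $u \in a_1(M)$ with $P_1(u) = \{v\}$ (in which case $v \in N_1(M)$). When $v \in N_1(M)$, $T$ is the singleton containing $v$'s unique dominator. When $v \in N_2(M)$, $T \subseteq a_2(M) \cap N(v)$, and minimality of $M'$ forces $T = a_2(M) \cap N(v)$: any $u' \in a_2(M) \cap N(v) \setminus T$ stays in $M'$ and acquires $v$ as a new neighbour, losing its only (self-)private neighbour with no external private available to replace it; $4$-cycle-freeness is used here to rule out exotic external privates for $u'$ arising through common neighbours of $u'$ and other elements of $T$. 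Thus $T$ is uniquely determined by $v$.

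Injectivity of $\tau$ now follows quickly. Distinct multi-adds at $v_1 \neq v_2$ have $\tau$-images in the disjoint sets $P_1(v_1), P_1(v_2)$ because every vertex of $N_1(M)$ has a unique dominator. Distinct single-adds have distinct added vertices $v$ by the second step. The only potential cross-collision is a multi-add at $v$ with representative $s \in P_1(v)$ meeting a single-add at $s$; but such a single-add at $s \in N_1(M)$ would be the slide from $s$'s unique dominator $v$, valid only when $v \in a_2(M)$ or $P_1(v) = \{s\}$, both incompatible with $v \in a_1(M)$ and $|P_1(v)| \geq 2$. Hence $\tau$ is injective and $\deg_{\R(G)}(M) \leq |V(G) \setminus M| = n - |M| \leq n - \gamma(G)$. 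The hardest part will be the second-step analysis for $v \in N_2(M)$, where both the no-triangle and no-$4$-cycle consequences of girth $\geq 5$ must be invoked simultaneously to force $T$ into its unique form.
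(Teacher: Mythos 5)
Your proof is correct, and its technical core coincides with the paper's: both arguments hinge on the same two girth-forced uniqueness facts, namely that a genuine expansion of $v$ must add exactly the external private neighbours of $v$ (triangle-freeness), and that a contraction at $v \in N_2(M)$ must remove exactly $a_2(M) \cap N(v)$ (no $4$-cycles). The difference is in the bookkeeping. The paper charges each neighbour of $M$ to a vertex of $a_1(M) \cup N_2(M)$, obtaining $\deg(M) \le |a_1(M)| + |N_2(M)|$, and then invokes Lemma \ref{lem:a1n1} (imported from \cite{2021Beaton}) to trade $|a_1(M)|$ for $|N_1(M)|$. You instead build an injection directly into $V(G) \setminus M$ by charging each multi-add to one of the private neighbours in $P_1(v) \subseteq N_1(M)$ that it is forced to absorb; this makes the argument self-contained (your injection re-proves the needed instance of $|a_1(M)| \le |N_1(M)|$ on the fly), at the price of the extra cross-collision check between a multi-add at $v$ and a single-add at some $s \in P_1(v)$, which you resolve correctly since such a slide would leave the remaining vertices of $P_1(v)$ undominated. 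A minor bonus of your version: in establishing $S \subseteq P_1(v)$ you supply the minimality argument that the paper's proof elides when it asserts that ``the only subset of $N(v)$ which can dominate $P_v$ is $P_v$ itself'' (a proper superset of $P_v \cap N(v)$ also dominates $P_v$ but destroys minimality, which is what your argument rules out).
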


\begin{proof}
Let $M$ be a minimal dominating set of $G$.
It suffices to show that the degree of $M$ in $\R(G)$ is at most $n-|M|$
Every reconfiguration of $M$ either expand a vertex in $a_1(M)$ or $a_2(M)$ or contract a vertex in $N_1(M)$ or $N_2(M)$.
However, every contraction of a vertex in $N_1(M)$ is necessarily the expansion of a vertex in $a_1(M)$.
Moreover, every expansion of a vertex in $a_2(M)$ is the contraction of a vertex in $N_2(M)$.
Every expansion or contraction of $M$ involves removing at least one vertex from $M$.
Thus we need only consider expansions of $v \in a_1(M)$ and contractions of $v \in N_2(M)$.
We will show in each there there is at most one expansion or contraction of each vertex.

Let $v \in a_1(M)$ and let $P_v$ be the private neighbours of $v$ with respect to $M$.
Any expansion of $v$ takes the form $M_v=M \cup T -\{v\}$ where $T \subseteq N(v)$.
Moreover $T$ must dominate every vertex in $P_v$.
As $G$ has girth at least 5 and hence triangle free.
So no two vertices in $N(v)$ are adjacent.
Thus the only subset of $N(v)$ which can dominate $P_v$ is $P_v$ itself.
Therefore any expansion of $v$ takes the form $M \cup P_v -\{v\}$.
Thus there is at most one expansion of $M$ which expands $v$. 

Let $v \in N_2(M)$ and let $A_2=N(v)\cap a_2(M)$.
Any contraction of $v$ takes the form $M_v=M \cup \{v\} - T$ where $T \subseteq N(v) \cap M$.
We will now show that if $M_v$ is a contraction of $v$ then $T=A_2$.
Let $x \in A_2$ and to show a contradiction suppose $x \notin T$ for a contraction $M_v$.
Every $x \in A_2$ only has itself as a private neighbour with respect to $M$.
In $M_v$, both $v$ and $x$ are adjacent to $x$.
Thus for $M_v$ to be minimal, $x$ has a private neighbour $x' \notin M_v$ with respect to $M_v$.
Note $x'$ was a not a private neighbour of $x$ with respect to $M$.
So let $y \in M$ be adjacent to $x'$.
Now $x$ and $y$ are both adjacent to $x'$ and $v$.
However, this forms a 4-cycle which contradicts $G$ having girth at least 5.
Thus every $x \in A_2$ is also in $T$ so $A_2 \subseteq T$.
Now consider $x \in T-A_2$ for a contraction $M_v$.
Then $x \in M$ with $x \notin a_2(M)$ and so $x \in a_1(M)$.
Let $x' \notin M$ be a private neighbour of $x$.
As $x \notin M_v$ and $M_v$ is a dominating set then $v$ must dominate $x'$.
However, then $x$, $x'$, and $v$ form a 3-cycle which contradicts $G$ having girth at least 5.
Thus $T-A_2$ is empty and hence $T \subseteq A_2$.
Therefore $T=A_2$ for any contraction of $v$.
Thus there is at most one contraction of $M$ which contracts $v$.

Every minimal dominating set $M_v$ of $G$, with $M \sim M_v$ in $\R(G)$, is either an expansion of $v \in a_1(M)$ or a contraction of $v \in N_2(M)$.
Moreover for each $v \in a_1(M) \cup N_2(M)$ there is at most one expansion or contraction of $v$.
Thus $\deg(M) \leq |a_1(M)|+|N_2(M)|$ where $\deg(M)$ denotes the degree of $M$ in $\R(G)$.
By Lemma \ref{lem:a1n1} we have that $|a_1(M)| \leq |N_1(M)|$.
Thus $\deg(M) \leq |N_1(M)|+|N_2(M)|=n-|M|$.
\end{proof}

%
%
\bibliographystyle{abbrv}
\bibliography{mybib}

\begin{thebibliography}{10}

\bibitem{YU2021112519}
Graphs in which $g-n[v]$ is a cycle for each vertex $v$.
\newblock {\em Discrete Mathematics}, 344(9):112519, 2021.

\bibitem{adaricheva2021reconfiguration}
K.~Adaricheva, C.~Bozeman, N.~E. Clarke, R.~Haas, M.-E. Messinger,
  K.~Seyffarth, and H.~C. Smith.
\newblock Reconfiguration graphs for dominating sets.
\newblock {\em Research Trends in Graph Theory and Applications}, pages
  119--135, 2021.

\bibitem{2021Beaton}
I.~Beaton and J.~I. Brown.
\newblock The average order of dominating sets of a graph.
\newblock {\em Discrete Mathematics}, 344(12):112595, 2021.

\bibitem{BERTOSSI198437}
A.~A. Bertossi.
\newblock Dominating sets for split and bipartite graphs.
\newblock {\em Information Processing Letters}, 19(1):37--40, 1984.

\bibitem{bonamy2021dominating}
M.~Bonamy, P.~Dorbec, and P.~Ouvrard.
\newblock Dominating sets reconfiguration under token sliding.
\newblock {\em Discrete Applied Mathematics}, 301:6--18, 2021.

\bibitem{COUTURIER2015634}
J.-F. Couturier, R.~Letourneur, and M.~Liedloff.
\newblock On the number of minimal dominating sets on some graph classes.
\newblock {\em Theoretical Computer Science}, 562:634--642, 2015.

\bibitem{edwards2018reconfiguring}
M.~Edwards, G.~MacGillivray, and S.~Nasserasr.
\newblock Reconfiguring minimum dominating sets: the $\gamma$-graph of a tree.
\newblock {\em Discussiones Mathematicae Graph Theory}, 38(3):703--716, 2018.

\bibitem{finbow2019gamma}
S.~Finbow and C.~M. van Bommel.
\newblock $\gamma$-graphs of trees.
\newblock {\em Algorithms}, 12(8):153, 2019.

\bibitem{haas2014k}
R.~Haas and K.~Seyffarth.
\newblock The k-dominating graph.
\newblock {\em Graphs and Combinatorics}, 30:609--617, 2014.

\bibitem{haas2017reconfiguring}
R.~Haas and K.~Seyffarth.
\newblock Reconfiguring dominating sets in some well-covered and other classes
  of graphs.
\newblock {\em Discrete Mathematics}, 340(8):1802--1817, 2017.

\bibitem{haddadan2016complexity}
A.~Haddadan, T.~Ito, A.~E. Mouawad, N.~Nishimura, H.~Ono, A.~Suzuki, and
  Y.~Tebbal.
\newblock The complexity of dominating set reconfiguration.
\newblock {\em Theoretical Computer Science}, 651:37--49, 2016.

\bibitem{kvrivstan2023shortest}
J.~M. K{\v{r}}i{\v{s}}t'an and J.~Svoboda.
\newblock Shortest dominating set reconfiguration under token sliding.
\newblock In {\em International Symposium on Fundamentals of Computation
  Theory}, pages 333--347. Springer, 2023.

\bibitem{lemanska2020reconfiguring}
M.~Lema{\'n}ska and P.~{\.Z}yli{\'n}ski.
\newblock Reconfiguring minimum dominating sets in trees.
\newblock {\em Journal of Graph Algorithms and Applications}, 24:47--61, 2020.

\bibitem{lokshtanov2018reconfiguration}
D.~Lokshtanov, A.~E. Mouawad, F.~Panolan, M.~Ramanujan, and S.~Saurabh.
\newblock Reconfiguration on sparse graphs.
\newblock {\em Journal of Computer and System Sciences}, 95:122--131, 2018.

\bibitem{mouawad2017parameterized}
A.~E. Mouawad, N.~Nishimura, V.~Raman, N.~Simjour, and A.~Suzuki.
\newblock On the parameterized complexity of reconfiguration problems.
\newblock {\em Algorithmica}, 78(1):274--297, 2017.

\bibitem{mynhardt2019connected}
C.~M. Mynhardt, L.~Teshima, and A.~Roux.
\newblock Connected k-dominating graphs.
\newblock {\em Discrete Mathematics}, 342(1):145--151, 2019.

\bibitem{rautenbach2021reconfiguring}
D.~Rautenbach and J.~Redl.
\newblock Reconfiguring dominating sets in minor-closed graph classes.
\newblock {\em Graphs and Combinatorics}, 37(6):2191--2205, 2021.

\bibitem{suzuki2016reconfiguration}
A.~Suzuki, A.~E. Mouawad, and N.~Nishimura.
\newblock Reconfiguration of dominating sets.
\newblock {\em Journal of Combinatorial Optimization}, 32:1182--1195, 2016.

\bibitem{zhang2022graphs}
B.~Zhang and B.~Wu.
\newblock Graphs g in which $g-n[v]$ has a prescribed property for each vertex
  $v$.
\newblock {\em Discrete Applied Mathematics}, 318:13--20, 2022.

\bibitem{zhang2024graphsreg}
B.~Zhang and B.~Wu.
\newblock Graphs g where $g-n[v]$ is a regular graph for each vertex $v$.
\newblock {\em Computational and Applied Mathematics}, 43(5):294, 2024.

\bibitem{zhang2024graphstree}
B.~Zhang and B.~Wu.
\newblock Graphs g where $g-n[v]$ is a tree for each vertex $v$.
\newblock {\em Graphs and Combinatorics}, 40(4):85, 2024.

\end{thebibliography}

\end{document}